\title{Super Morita theory}
\author{Stephen Kwok}
\begin{document}

\theoremstyle{plain} \newtheorem{thm}{Theorem}[subsection]
\theoremstyle{plain} \newtheorem*{mythm}{Theorem}
\theoremstyle{plain} \newtheorem{lem}[thm]{Lemma}
\theoremstyle{plain} \newtheorem{prop}[thm]{Proposition}
\theoremstyle{plain} \newtheorem{cor}[thm]{Corollary}
\theoremstyle{definition} \newtheorem{defn}[thm]{Definition}

\maketitle

\begin{abstract}
We develop the basics of Morita theory for super rings. As an application, we produce a more explicit super Morita equivalence in the case of super Azumaya algebras.
\end{abstract}

\section{Introduction}

Two associative rings $R$ and $S$ with unit are
said to be {\it Morita equivalent} if the categories
$\mathfrak{M}_R$ and $\mathfrak{M}_S$ of right $R$- (resp. $S$-)
modules are equivalent.

The simplest and most prototypical example of Morita equivalence is
the equivalence between the category of $R$-modules and the category
of $\mathbb{M}_n(R)$-modules, where $\mathbb{M}_n(R)$ denotes the
ring of $n \times n$ matrices with entries in some associative ring
$R$. (This example is treated in detail in Ch. 17 of
\cite{Lam}).

Many interesting properties of rings (such as centers, K-theory, G-theory etc.) remain invariant under Morita equivalence, making the theory of Morita
equivalences a central tool in many areas of algebra.

The purpose of this work is to extend the basic theory of Morita
equivalences to super rings, i.e. rings $R$ with a
$\mathbb{Z}_2$-grading such that the multiplication is compatible
with the grading. The category of $R$-supermodules is that whose
objects are $\mathbb{Z}_2$-graded $R$-modules, and whose morphisms
are parity-preserving $R$-module homomorphisms.

We now briefly outline the contents of this paper. In the second section, we explain the supermodule theory needed for
the remainder of the paper, including the definitions of projective
module and generator.

The third section is the main portion of this paper and it contains
the proofs of the basic Morita theorems for supermodules. Our
treatment is modeled on the discussion of ungraded Morita theory in Ch. 18 of
\cite{Lam}; one may see Ch. 1 of \cite{B} for a more abstract formulation and development of this theory.

In the final section, we apply the super Morita theory developed in
the third section to the context of {\it super Azumaya algebras},
and show that for a super Azumaya algebra $A$, the Morita
equivalence may be realized more explicitly, in terms of the
{\it supercommutant} of an $(A, A)$-bimodule $M$.

In the ungraded case, the corresponding results on ordinary Azumaya
algebras may be found in III.5 of \cite{KnO}, where they are derived as a
corollary of more general considerations. However, we have preferred
to give a more self-contained and concrete treatment of this
material, better suited to our own purposes.

In another paper \cite{Kwo}, we develop the theory of
$\Pi$-invertible sheaves in supergeometry. Our main motivation for
the present work was ultimately to explain the existence of a
``product" structure on $\Pi$-invertible sheaves in terms
of the ``super skew field": the super Azumaya algebra $\mathbb{D} =
k [\theta]$, where $\theta$ is odd, $\theta^2 = -1$, for $k$ an
algebraically closed field of characteristic $\neq 2$.

We note that similar considerations are briefly discussed in the paper \cite{KKT}, in the context of quiver Hecke superalgebras. \\

\section {Module theory for super rings}

In this section $R$ denotes a (not necessarily commutative)
associative super ring with unit, that is, a $\mathbb{Z}_2$-graded
ring $R = R_0 \oplus R_1$ whose multiplication is compatible with
the grading:

\begin{align*}
R_i \cdot R_j \subseteq R_{i+j}
\end{align*}

\noindent where $i, j \in \mathbb{Z}_2$.

A {\it supermodule} for a super ring $R$ is a $\mathbb{Z}_2$-graded
$R$-module $M = M_0 \oplus M_1$ such that the $R$-module structure is compatible with
the grading on $M$:

\begin{align*}
R_i \cdot M_j \subseteq M_{i+j}
\end{align*}

The {\it parity} of a homogeneous element of a super ring $R$, or $R$-supermodule $M$, is defined as:

\begin{align*}
|x| = 0 \text{ if $x$ is even}\\
1 \text{ if $x$ is odd}
\end{align*}

$R$-supermodule homomorphisms are simply $R$-module homomorphisms
which preserve the grading (i.e., the parity). The set of
$R$-supermodule homomorphisms between supermodules $M$ and $N$ is
denoted $Hom_R(M,N)$ (for simplicity of notation, we will often drop
the subscript $R$ and write $Hom(M, N)$ when the ring $R$ under
consideration is clear). We will often refer to $Hom(M, N)$ as the
{\it categorical $Hom$}.

We will also have occasion to consider the collection of all
$R$-module homomorphisms between $M$ and $N$ (regarding both as {\it
ungraded} modules), which we denote by $\underline{Hom}_R(M, N)$. We
refer to $\underline{Hom}_R(-, -)$ as the {\it internal
$\underline{Hom}$}. As before, we will often drop the subscript $R$
when the ring under consideration is clear.

$\underline{Hom}(M, N)$ is endowed with a natural $\mathbb{Z}_2$-grading: the even part $(\underline{Hom}(M, N))_0$ consists of the parity-preserving homomorphisms, and the odd $(\underline{Hom}(M, N))_1$ consists of the parity-reversing homomorphisms. (The even homomorphisms are precisely the $R$-supermodule homomorphisms in the sense just defined).

We may make the collection of right (resp. left) $R$-modules into a
category $\mathfrak{M}_R$ (resp. ${}_R \mathfrak{M}$) by taking the
morphisms between two $R$-modules $M$ and $N$ to be the
$R$-supermodule homomorphisms $Hom(M, N)$. This is the reason we
refer to $Hom(-, -)$ as the categorical $Hom$.

The word ``homomorphism" may denote either elements of $Hom$
or of $\underline{Hom}$, but we will reserve the term {\it morphism}
solely for elements of $Hom$. Likewise, the terms {\it monomorphism,
epimorphism, isomorphism}, etc. will be understood to refer only to
elements of $Hom$ unless otherwise specified.

From now on we will drop the prefix ``super" and refer to objects of $\mathfrak{M}_R$ and ${}_R \mathfrak{M}$ simply as {\it modules}.

Some conventions: homomorphisms $f: M \to N$ of left $R$-modules
will occasionally be written to act on the right of $M$: thus we
will sometimes write $xf$ for $f(x)$. In this case we will use
$\bullet$ to denote composition of homomorphisms: $x (g \bullet f)$
for $f(g(x))$.

Note that with this convention, we have:

\begin{equation*}
(rx)f = (-1)^{|r||f|}r(xf)
\end{equation*}\\

Given super rings $S$, $R$, we may define the category of $(S, R)$-super bimodules ${}_S \mathfrak{M}_R$ in the obvious way (the corresponding definitions of categorical $Hom$ and internal $\underline{Hom}$ should by now be clear to the reader).

We define the dual of a right (resp. left) $R$-module $M$, denoted
$M^*$, to be the left (resp. right) $R$-module $\underline{Hom}(M, R)$, with the $R$-module structure given by left (resp. right) multiplication in $R$:

\begin{align*}
(rf)(x) &:= r[f(x)]\\
x(fr) &:= (-1)^{|x||r|}(xf)r
\end{align*}

Finally, if $M$ is a right $R$-module and $N$ is a left $R$-module,
the {tensor product} $M \otimes_R N$ is defined to be the usual
tensor product of $M$ and $N$, but endowed with the grading

\begin{align*}
&(M \otimes_R N)_k = \bigoplus_{i+j = k} M_i \otimes_R N_j\\
\end{align*}

If $f: M \to M', g: N \to N'$ are right (resp left) $R$-module
homomorphisms, then the induced map $f \otimes g: M \otimes_R N \to
M' \otimes N'$ is:

\begin{equation*}
(f \otimes g)(m \otimes n) := (-1)^{|g||m|} fm \otimes ng
\end{equation*}\\

We define the category of {\it superabelian groups} to be the
category of $\mathbb{Z}$-supermodules, with $\mathbb{Z}$ considered
as a (purely even) supercommutative ring: that is, the objects of
the category are $\mathbb{Z}_2$-graded abelian groups, with the
morphisms being the parity-preserving group homomorphisms. (There
should be no confusion with the completely different term
``supergroup"). The internal $\underline{Hom}$ of the category of
superabelian groups consists of all group homomorphisms, and we
denote the category of superabelian groups by $(SAb)$.

Using this terminology, we note that $M \otimes_R N$ has {\it a
priori} only the structure of a superabelian group.

If we suppose in addition that $M, M'$ are $(S,
R)$-bimodules, $N, N'$ are $(R, T)$-bimodules for $S, R$ super
rings, and $f, g$ are $(S, R)$ (resp. $(R, T)$-bimodule
homomorphisms, then one may verify that $f \otimes g: M \otimes_R N \to M' \otimes_R N'$ defined as above is an $(S, T)$-bimodule homomorphism.

Next we discuss the {\it opposite} of a super ring $R$. This is the super ring $R^o$ whose
underlying set is equal to that of $R$, and whose multiplication is
given by:

\begin{equation*}
x \cdot_{o} y := (-1)^{|x||y|} y \cdot x
\end{equation*}\

\noindent for $x, y \in R$, where the multiplication on the right hand side is that of $R$.

A left (resp. right) $R$-module $M$ may be canonically converted into a right (resp. left) $R^o$-module by defining the right $R$- (resp. left $R^o$)-action to be:

\begin{align*}
m \cdot_o r := (-1)^{|r||m|} r \cdot m
\end{align*}

\noindent (resp. \begin{align*}
r \cdot_o m := (-1)^{|r||m|} m \cdot r).\\
\end{align*}

One checks readily that this recipe sends left (resp. right) $R$-module homomorphisms to right (resp. left) $R^o$-module homomorphisms, and that it preserves the parity of homomorphisms. In particular, it defines a natural functor $\cdot {}^o: {}_R\mathfrak{M} \to \mathfrak{M}_{R^o}$ (resp. $\mathfrak{M}_R \to {}_{R^o} \mathfrak{M}$). The reader may verify that $(\cdot {}^o)^2$ is the identity functor (the key point is that $(R^o)^o = R$ as super rings), hence $\cdot {}^o$ is an equivalence of categories. We may therefore convert any statement about right (resp. left) $R$-modules into a corresponding statement about left (resp. right) $R^o$-modules in a completely natural way. In particular, $\underline{End}_R(P) = \underline{End}_{R^o}(P^o)$ as super rings.

One readily extends $\cdot^o$ to categories of bimodules as well, and checks that $\cdot^o: {}_S \mathfrak{M}_R \to {}_{R^o} \mathfrak{M}_{S^o}$ is an equivalence of categories as before.

Suppose $M \in \mathfrak{M}_R$ and $N \in {}_R \mathfrak{M}$. Using the universal property of the super tensor product (cf. \cite{DM}), the reader may verify that there is a natural isomorphism of superabelian groups $(M \otimes_R N)^o \to N^o \otimes_{R^o} M^o$ given by $m \otimes_R n \mapsto (-1)^{|m||n|} n \otimes_{R^o} m$. If $M$ is an $(S, R)$-bimodule and $N$ is an $(R, T)$-bimodule, then this isomorphism is an isomorphism in the category of $(T^o, S^o)$-bimodules.\\

\subsection{Projective modules.}

The definition of projective module is exactly the same as that in
the classical case:

\begin{defn}
A (right) $R$-module $P$ over a super ring $R$ is said to be {\it
projective} if $Hom_R(P, -): \mathfrak{M}_R \to (Ab)$ is an exact
functor; that is, if

\begin{equation*}
0 \to A \to B \to C \to 0
\end{equation*}\

\noindent is a short exact sequence of $R$-modules, then the induced
sequence of abelian groups:

\begin{equation*}
0 \to Hom(P, A) \to Hom_R(P, B) \to Hom_R(P, C) \to 0
\end{equation*}\

\noindent is short exact.
\end{defn}

\noindent {\bf Remark.} 1) $Hom(M, -)$ is a left exact functor for
any module $M$, that is, for an exact sequence:

\begin{align*}
0 \to A \to B \to C
\end{align*}\

the sequence

\begin{equation*}
0 \to Hom(P, A) \to Hom_R(P, B) \to Hom_R(P, C)
\end{equation*}\

\noindent is exact. The key property of a projective module $P$ is that if $B \to C$ is an epimorphism, $Hom(P, B) \to Hom(P, C)$ is also an epimorphism.\\

\noindent 2) Note that the definition of projective module involves
the categorical $Hom$ and not internal $\underline{Hom}$; as we
shall see, this situation is the precise converse to the definition
of generator, which involves internal $\underline{Hom}$ and not
categorical $Hom$.

An $R$-module $M$ is {\it free} if it has a homogeneous $R$-basis $\{e_i | f_j\}$ $i \in I$, $j \in J$, with $e_i$ even, $f_j$ odd. If $|I| = m$ and $|J| = n$ are both finite, then we define the {\it rank} of $M$ to be the pair of integers $m|n$. It is left to the reader to check that a free $R$-module satisfies the usual sort of universal property. Having made this definition, the proof of the following proposition is also identical to that in
the purely even case:

\begin{prop}
A right $R$-module $P$ is projective if and only if there exist a
free module $F$ and morphisms $\pi: F \to P$, $i: P \to F$ such that
$\pi \circ i = id_P$.
\end{prop}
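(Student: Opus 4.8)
The plan is to mimic the classical argument, paying attention only to the places where parity could intervene. Recall we want to show a right $R$-module $P$ is projective if and only if there are a free module $F$ and morphisms (i.e.\ parity-preserving maps) $\pi\colon F\to P$, $i\colon P\to F$ with $\pi\circ i=\mathrm{id}_P$.

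For the ``if'' direction, I would first observe that any free module $F$ is projective: given an epimorphism $g\colon B\to C$ and a morphism $h\colon F\to C$, pick a homogeneous basis $\{e_\alpha\mid f_\beta\}$, lift each $g(e_\alpha)$ and $g(f_\beta)$ — wait, rather lift each $h(e_\alpha)\in C_0$ and each $h(f_\beta)\in C_1$ to an element of $B$ of the \emph{same} parity (possible because $g$ is surjective and parity-preserving, so $B_0\to C_0$ and $B_1\to C_1$ are both surjective), and use the universal property of the free module to define a morphism $F\to B$ lifting $h$. This shows $\mathrm{Hom}(F,B)\to\mathrm{Hom}(F,C)$ is surjective; combined with left-exactness (the first Remark) we get that $\mathrm{Hom}(F,-)$ is exact, i.e.\ $F$ is projective. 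Then, given the splitting $\pi\circ i=\mathrm{id}_P$, for any epimorphism $g\colon B\to C$ and morphism $h\colon P\to C$, the composite $h\circ\pi\colon F\to C$ lifts to some $\tilde h\colon F\to B$, and $\tilde h\circ i\colon P\to B$ is the desired lift of $h$ (since $g\circ\tilde h\circ i=h\circ\pi\circ i=h$); the map $\mathrm{id}=\pi_*\circ i^*$ on $\mathrm{Hom}$-groups reasoning packages this cleanly, showing $\mathrm{Hom}(P,-)$ is a retract of an exact functor, hence exact.

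For the ``only if'' direction, I would choose a homogeneous set of generators for $P$: take a basis of the $k$- (or rather, a generating set of the) even part and odd part appropriately — concretely, let $\{p_\alpha\mid q_\beta\}$ be a homogeneous generating set of $P$, form the free module $F$ on symbols $\{e_\alpha\mid f_\beta\}$ of matching parity, and let $\pi\colon F\to P$ be the induced morphism $e_\alpha\mapsto p_\alpha$, $f_\beta\mapsto q_\beta$; it is parity-preserving and surjective by construction. This gives a short exact sequence $0\to K\to F\xrightarrow{\pi} P\to 0$ in $\mathfrak{M}_R$. Applying projectivity of $P$ to this sequence, $\mathrm{Hom}(P,F)\to\mathrm{Hom}(P,P)$ is surjective, so $\mathrm{id}_P$ lifts to a morphism $i\colon P\to F$ with $\pi\circ i=\mathrm{id}_P$, as required.

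The only genuinely ``super'' point, and the one I would be most careful about, is the repeated claim that a parity-preserving epimorphism $g\colon B\to C$ restricts to surjections $B_0\twoheadrightarrow C_0$ and $B_1\twoheadrightarrow C_1$ separately, so that homogeneous elements can be lifted to homogeneous elements of the same parity — this is what makes the universal property of the free module applicable in the graded setting and is the analogue of the classical ``lift a basis'' step. This is immediate from the fact that morphisms in $\mathfrak{M}_R$ are by definition parity-preserving and that a surjective graded map is surjective in each degree. No signs enter, because all maps in sight are morphisms (even), so there is nothing subtle in the Koszul-sign bookkeeping; the proof is then word-for-word the classical one (e.g.\ \cite{Lam}, Ch.\ 2).
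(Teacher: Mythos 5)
Your proof is correct and is exactly the argument the paper has in mind: the paper itself gives no proof here, remarking only that it is ``identical to that in the purely even case,'' and you have supplied that classical splitting argument together with the one genuinely super-specific observation it needs, namely that a parity-preserving epimorphism is surjective in each degree, so that homogeneous generators of $P$ can be matched with homogeneous basis elements of $F$ of the same parity.
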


We say that $i$ is a {\it splitting} and that $i$ {\it splits}
$\pi$. This proposition is equivalent to the assertion that $P$ is
projective iff $P$ is isomorphic to a direct summand of some free
module $F$, and shows that our definition of projective module is
entirely equivalent to that given in Appendix B of \cite{CCF}.

We have the following characterization of projective $R$-modules,
the super Dual Basis Lemma.

\begin{prop}\label{lem:dualbasis}
Let $R$ be an associative super ring. A right $R$-module $P$ is
projective if and only if there exist a family of homogeneous
elements $\{a_i , b_j : i \in I, j \in J\} \subseteq P$ ($a_i$ even,
$b_j$ odd) and homogeneous linear functionals $\{ f_i , g_j: i \in
I, j \in J\} \subseteq P^*$ ($f_i$ even, $g_j$ odd) such that, for
any homogeneous $c \in P$, $f_i(c) = 0, g_j(c) = 0$ for almost all
$i.j$, and $c = \sum_i a_i f_i(c) + \sum_j b_j g_j(c)$.
\end{prop}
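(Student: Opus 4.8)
The plan is to prove the super Dual Basis Lemma by reducing it to the splitting characterization of projectivity from the preceding proposition, exactly as in the ungraded case but keeping careful track of the $\mathbb{Z}_2$-grading and the Koszul signs.

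For the forward direction, suppose $P$ is projective. By the preceding proposition there is a free module $F$ with homogeneous basis $\{e_i \mid f_j\}$ ($e_i$ even, $f_j$ odd) and morphisms $\pi\colon F \to P$, $\iota\colon P \to F$ with $\pi \circ \iota = \mathrm{id}_P$. Set $a_i := \pi(e_i)$, $b_j := \pi(f_j)$; these are homogeneous of the stated parities since $\pi$ is parity-preserving. Write $e_i^*, f_j^* \in F^*$ for the coordinate functionals dual to the basis (homogeneous, even resp. odd, and locally finite by the universal property of the free module). Then define $f_i := e_i^* \bullet \iota$ and $g_j := f_j^* \bullet \iota$ in $P^*$, again homogeneous of the right parities. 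For any homogeneous $c \in P$, write $\iota(c) = \sum_i e_i\,e_i^*(\iota(c)) + \sum_j f_j\,f_j^*(\iota(c))$, a finite sum, and apply $\pi$; using that $\pi$ is $R$-linear and parity-preserving one gets $c = \sum_i a_i f_i(c) + \sum_j b_j g_j(c)$, with the local finiteness inherited from that in $F$.

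For the converse, suppose such families $\{a_i, b_j\}$ and $\{f_i, g_j\}$ exist. Let $F$ be the free module on a homogeneous basis $\{e_i \mid f_j\}$ matching the index sets and parities. Define $\pi\colon F \to P$ by $e_i \mapsto a_i$, $f_j \mapsto b_j$ (extended $R$-linearly; this is a morphism since it sends homogeneous basis elements to homogeneous elements of equal parity). Define $\iota\colon P \to F$ on homogeneous $c$ by $\iota(c) := \sum_i e_i f_i(c) + \sum_j f_j g_j(c)$, extended additively; the sum is finite by hypothesis. One checks $\iota$ is $R$-linear and parity-preserving — here the sign conventions $(rx)f = (-1)^{|r||f|}r(xf)$ matter, but since $f_i$ is even and $g_j$ is odd the bookkeeping is straightforward, and $|e_i g_j(c)| = |b_j| + |g_j(c)|$ works out to $|c|$ as required. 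Then $\pi(\iota(c)) = \sum_i a_i f_i(c) + \sum_j b_j g_j(c) = c$, so $\pi \circ \iota = \mathrm{id}_P$ and $P$ is projective by the previous proposition.

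I expect the main obstacle to be purely clerical rather than conceptual: verifying that $\iota$ in the converse direction is genuinely parity-preserving and $R$-linear, which requires threading the Koszul sign rule through the definition of the dual module action $x(fr) = (-1)^{|x||r|}(xf)r$ and the composition convention. One must check, for a homogeneous $r \in R$ and homogeneous $c \in P$, that $\iota(cr) = \iota(c)r$; since each $f_i$ is even (and each $g_j$ odd) the signs from moving $r$ past the functional are controlled, but this is the one place where an incautious treatment could go wrong. Everything else — the locally finite condition, homogeneity of the $a_i, b_j, f_i, g_j$, and the reduction to the splitting proposition — transfers verbatim from the classical argument in Ch.~18 of \cite{Lam}.
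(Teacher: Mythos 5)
Your argument is correct and essentially the same as the paper's: both directions reduce to the splitting characterization of projectivity, using the same free module, the splitting maps $\pi$ and $\iota$, and the coordinate functionals pulled back along $\iota$. One small remark: your worry about threading the Koszul sign rule through the $R$-linearity check for $\iota$ is unnecessary, since $f_i, g_j \in P^* = \underline{Hom}_R(P,R)$ are right-$R$-module homomorphisms acting on the left of the right module $P$, so $f_i(cr) = f_i(c)r$ and $g_j(cr) = g_j(c)r$ hold with no sign regardless of parity; the convention $x(fr) = (-1)^{|x||r|}(xf)r$ governs the left $R$-module structure on $P^*$, which is never invoked in this computation.
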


\begin{proof}
Suppose such $a_i$'s, $b_j$'s and $f_i$'s, $g_j$'s exist. Consider
the epimorphism $h$ from the free module $F := \bigoplus d_i R \,
\oplus \, \bigoplus e_j R$ to $P$ defined by $h(d_i) = a_i$, $h(e_j)
= b_j$. (Here the $d_i$ are even, the $e_j$ odd). Then the map $k: P
\to F$ given by $h(c) = \sum_i d_i f_i(c) + \sum_j e_j g_j(c)$ is a
morphism splitting $h$, hence $P$ is projective.

Conversely, suppose $P$ is projective, and fix an epimorphism $h$
from a free module $\bigoplus d_i R \, \oplus \, \bigoplus e_j R$
onto $P$. Since $P$ is projective, there exists a splitting morphism
$k: P \to F$, and for homogeneous $c$, we may write $k(c) = \sum_i
d_i f_i(c) + \sum_j e_j g_j(c)$.

One checks that $f_i, g_j$ are $R$-linear and homogeneous, and that
$f_i$ and $g_i$ are zero for all but finitely many $i,j$.

Applying $h$ to both sides of the previous equation, we have $c =
\sum_i a_i f_i(c) + \sum_j b_j g_j(c)$, where $a_i := h(d_i), b_j :=
h(e_j)$.\\
\end{proof}

\noindent {\bf Remark.} The definition of a projective left
$R$-module is entirely analogous to that for right modules, and all the results
proven in this section are also true in the category of left
$R$-modules; the proofs may be easily supplied by the reader.\\

\subsection{Generators.}

Suppose $I \subseteq R$ is a homogeneous two-sided ideal in a super
ring $R$. Then the quotient ring $R/I$ is naturally a right (left)
$R$-module with grading induced from $R$, and the quotient map $R
\to R/I$ is an $R$-module epimorphism.

\begin{defn}
Let $P$ be a right $R$-module. The {\it trace ideal} of $P$, denoted
$tr(P)$, is the module:

\begin{align*}
tr(P) := \sum g(P)
\end{align*}\

\noindent where the sum is taken over all homogeneous $g \in P^* =
\underline{Hom}(P, R)$.
\end{defn}\

(We could just as well have left out the adjective ``homogeneous"
without affecting this definition at all, but what we have done
makes the proof of Prop. \ref{prop:generator} a little shorter.) One
checks readily that $tr(P)$ is a homogeneous two-sided ideal of $R$.
We now make the following important definition:

\begin{defn}
Let $P$ be a right $R$-module over an associative super ring $R$. We
say $P$ is a {\it generator} in $\mathfrak{M}_R$ if
$\underline{Hom}_R(P, -) : \mathfrak{M}_R \to (SAb)$ is a faithful
functor, i.e. for each $R$-morphism $f: M \to N, f_* = 0 \implies f
= 0$, where $f_*: \underline{Hom}_R(P, M) \to \underline{Hom}_R(P,
N)$ denotes the morphism in $(SAb)$ induced by composition with $f$.
\end{defn}\

\noindent {\bf Remark.} Here the super Morita theory diverges in a
crucial way from the ungraded theory: whereas in ordinary module
theory, categorical $Hom$ and internal $\underline{Hom}$ are the
same thing, here they are different, and one must take careful note that it is the
categorical $Hom$ that appears in the definition of projective module, while it is the internal $\underline{Hom}$ that appears in the definition of generator.\\




\subsection{Parity reversal.}

We recall some facts about the parity reversal functor $\Pi:
\mathfrak{M}_R \to \mathfrak{M}_R$ (resp. $\Pi: _R \mathfrak{M} \to
_R\mathfrak{M}$). Given a (left or right) $R$-module $M$, the
underlying set of $\Pi M$ is equal to the underlying set of $M$, but
endowed with the reverse grading:

\begin{align*}
&(\Pi M)_i = M_{i+1}
\end{align*}\\

for $i \in \mathbb{Z}_2$. $\Pi M$ is endowed with natural right (resp. left) $R$-module structures. The right $R$-module structure on $\Pi
M$ is defined to be the same as that on $M$:

\begin{align*}
m \cdot_\Pi r := m \cdot r
\end{align*}\

The left $R$-module structure is defined by:

\begin{align*}
r \cdot_\Pi m := (-1)^{|r|} r \cdot m
\end{align*}\

\noindent for homogeneous elements $r \in R, m \in \Pi M$. One checks that these indeed define $R$-module structures on $\Pi M$.

Although $\Pi$ is of course the identity when viewed purely as a map
of sets, given an element $m$ in the set $M$, we will occasionally
write $\Pi(m)$ for emphasis when we regard $m$ as an element of the
module $\Pi M$. With this convention, $\Pi(\Pi(m)) = m$, hence we
may write formally $\Pi^2 = id$ (we will justify this when we prove that $\Pi(\Pi M) = M$ as $R$-modules).

With this convention, the above definitions of the $R$-module structures may be rewritten as:

\begin{align*}
\Pi(r \cdot m) &= (-1)^{|r|} r \cdot_\Pi \Pi(m)\\
\Pi(m \cdot r) &= \Pi(m) \cdot_\Pi r
\end{align*}\

Given $f: M \to N$ a homogeneous $R$-homomorphism, we may define
associated homomorphisms $\Pi f: M \to \Pi N$ and $f \Pi: \Pi M \to
N$

\begin{align*}
&(\Pi f)(m) := \Pi(f(m))\\
&(f \Pi)(\Pi m) := f(m)
\end{align*}\




The reader will check that these are indeed $R$-module morphisms. In the next proposition, we collect some basic facts, which can be
mostly found in Ch.3, 1.5 of \cite{Man}, relating $\Pi$ to module homomorphisms.

\begin{prop}\label{prop:parity change}
Let $M$ and $N$ be right (left) $R$-modules. Then

\begin{itemize}

\item $\Pi(\Pi M) = M$ as right (left) $R$-modules.

\item There are odd isomorphisms of superabelian groups:

\begin{align*}
\Pi \cdot : \underline{Hom}_R(M, N) &\to \underline{Hom}_R(M, \Pi N)\\
&f \mapsto \Pi f\\
\cdot \Pi: \underline{Hom}_R(M, N) &\to \underline{Hom}_R(\Pi M, N)\\
&f \mapsto f \Pi
\end{align*}

\end{itemize}

\end{prop}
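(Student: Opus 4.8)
The plan is to verify each bulleted claim by unwinding the definitions given above, working with homogeneous elements throughout since everything in sight is $\mathbb{Z}_2$-graded. I would organize the proof into two parts matching the two bullets.

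For the first bullet, $\Pi(\Pi M) = M$ as right (or left) $R$-modules: as sets everything coincides with $M$, and the gradings match because $(\Pi(\Pi M))_i = (\Pi M)_{i+1} = M_{i+1+1} = M_i$. For right modules the action is unchanged at each stage, so there is nothing to check. For left modules I would compute using the rewritten formulas: the action on $\Pi(\Pi M)$ applied twice picks up $(-1)^{|r|}$ twice, giving $(-1)^{2|r|} = 1$, so the twisted action on $\Pi(\Pi M)$ agrees with the original action on $M$. Concretely, $r \cdot_{\Pi\Pi} \Pi(\Pi(m)) = (-1)^{|r|} r \cdot_\Pi \Pi(m) = (-1)^{|r|}(-1)^{|r|} r \cdot m = r \cdot m$, and under the identification $\Pi(\Pi(m)) = m$ this is exactly the original module structure. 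This also retroactively justifies writing $\Pi^2 = \mathrm{id}$.

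For the second bullet, I would first check that $\Pi\cdot$ and $\cdot\Pi$ are well-defined maps of the stated superabelian groups, i.e. that $\Pi f$ is an $R$-module homomorphism $M \to \Pi N$ and $f\Pi$ is an $R$-module homomorphism $\Pi M \to N$ whenever $f$ is homogeneous; these are the checks the excerpt already flagged as "left to the reader," so I would either cite that or do the one-line verification (for the right action they are literally the same underlying map; for the left action one tracks a single sign). Next, additivity in $f$ is immediate from the pointwise definitions $(\Pi f)(m) = \Pi(f(m))$ and $(f\Pi)(\Pi m) = f(m)$. The key points are: (i) these maps are odd, meaning they shift parity — if $f$ is homogeneous of parity $p$ as an element of $\underline{Hom}_R(M,N)$, then $\Pi f$ has parity $p+1$ as an element of $\underline{Hom}_R(M, \Pi N)$, because $(\Pi f)(M_i) = \Pi(f(M_i)) \subseteq \Pi(N_{i+p}) = (\Pi N)_{i+p+1}$; similarly for $\cdot\Pi$; and (ii) these maps are bijective, with inverse given by composing with $\Pi$ again on the appropriate side — precisely, using $\Pi^2 = \mathrm{id}$ from the first bullet, the map $\Pi\cdot : \underline{Hom}_R(M,\Pi N) \to \underline{Hom}_R(M, \Pi\Pi N) = \underline{Hom}_R(M,N)$ is a two-sided inverse to the original $\Pi\cdot$, and likewise $\cdot\Pi$ is its own inverse in the analogous sense. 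Thus each is an odd isomorphism of superabelian groups.

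I do not anticipate a genuine obstacle here — the statement is essentially bookkeeping — but the one place to be careful is the sign in the left-module structure on $\Pi M$ and the resulting need to confirm that $\Pi f$ and $f\Pi$ genuinely respect that twisted action rather than the naive one; this is the only spot where a sign could go wrong, so I would write out that verification explicitly for left modules. Everything else follows formally from $\Pi^2 = \mathrm{id}$ and the pointwise definitions.
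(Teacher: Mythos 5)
Your proposal is correct and follows essentially the same route as the paper: verify the gradings and compute the double sign $(-1)^{2|r|}=1$ for the left-module structure, then observe that $f\mapsto\Pi f$ and $f\mapsto f\Pi$ are additive parity-shifting maps with inverses given by applying $\Pi$ once more, using the first bullet. You fill in a few of the steps the paper explicitly leaves to the reader (the well-definedness of $\Pi f$ and $f\Pi$ as module maps, and the explicit parity-shift computation), but the underlying argument is the same.
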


\begin{proof}

To prove that $\Pi (\Pi M) = M$, one first notes that the
gradings on $M$ and $\Pi (\Pi M)$ are the same:

\begin{align*}
[\Pi (\Pi M)]_i&= (\Pi M)_{i+1}\\
&=M_i
\end{align*}

Now we check they have the same $R$-module structures. The case of right modules is trivial, so we only need check the case of left modules.

\begin{align*}
r \cdot_{\Pi \Pi} m &= (-1)^{|r|} r \cdot_{\Pi} m\\
&= r \cdot m
\end{align*}

Thus $\Pi(\Pi M) = M$ in ${}_R \mathfrak{M}$ and $\mathfrak{M}_R$,
as desired.

We now discuss the second item. Let $f \in \underline{Hom}(M, N)$ be
homogeneous. Then $\Pi f \in \underline{Hom}(M, \Pi N)$ is
homogeneous of the opposite parity. Hence $f \to \Pi f$ is an odd
map. That it is a homomorphism of abelian groups is clear.
Similarly, if $g: M \to \Pi N$, then $g \Pi: M \to \Pi(\Pi N) = N$
is a homomorphism of the opposite parity, hence $g \mapsto \Pi g$
gives an odd homomorphism of abelian groups $\underline{Hom}(M, \Pi
N) \to \underline{Hom}(M, N)$ and it is left to the reader to check
that it is inverse to $f \mapsto \Pi f$.

The proof for $\cdot \Pi$ proceeds similarly and is also left to the
reader.






\end{proof}

We now give several equivalent characterizations of a generator:

\begin{prop}\label{prop:generator}
Let $P$ be a right $R$-module over an associative super ring $R$.
The following are equivalent:\\

\begin{enumerate}
\item $P$ is a generator in $\mathfrak{M}_R$.
\item $tr(P) = R$.
\item $R$ is a direct summand of a finite direct sum $\oplus_i P \oplus_j \Pi P$.
\item $R$ is a direct summand of a direct sum $\oplus_i P \oplus_j \Pi P$.
\item Every $M \in \mathfrak{M}_R$ is an epimorphic image of some direct sum $\oplus_i P \oplus_j \Pi P$.\\
\end{enumerate}
\end{prop}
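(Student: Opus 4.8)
The plan is to prove the cyclic chain of implications $(1)\Rightarrow(2)\Rightarrow(3)\Rightarrow(4)\Rightarrow(5)\Rightarrow(1)$, mirroring the ungraded argument in Ch.~18 of \cite{Lam} but carefully tracking parities and inserting $\Pi$ wherever a parity mismatch would otherwise obstruct the classical construction. The reason $\Pi P$ must appear alongside $P$ in items (3)--(5) is precisely that a homogeneous functional $g \in P^*$ may be odd, so $g(P) \subseteq R$ is the image of an odd map; to realize all of $R$ (in particular the even part) as a sum of images of morphisms out of copies of $P$, we must allow ourselves to compose with the odd isomorphism $\cdot\Pi : \underline{Hom}(P,R) \to \underline{Hom}(\Pi P, R)$ from Proposition~\ref{prop:parity change}, which turns an odd functional on $P$ into an even morphism out of $\Pi P$.

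First, for $(1)\Rightarrow(2)$: let $I = tr(P)$, a homogeneous two-sided ideal, and consider the quotient map $\pi: R \to R/I$ together with the zero map $0: R \to R/I$. I would show $\pi_* = 0$ on $\underline{Hom}_R(P, R) \to \underline{Hom}_R(P, R/I)$: given homogeneous $h \in \underline{Hom}_R(P,R)$, by definition $h(P) \subseteq tr(P) = I$, so $\pi \circ h = 0$. Faithfulness of $\underline{Hom}_R(P,-)$ then forces $\pi = 0$, i.e. $R/I = 0$, i.e. $I = R$. (Strictly one should note the faithfulness hypothesis is phrased for morphisms $f$ with $f_* = 0$ on categorical-$Hom$-valued functors, so I would first remark that $\underline{Hom}(P,-)$ faithful forces $f = 0$ whenever $f_* = 0$, applied here to $f = \pi$, using that $h$ ranges over \emph{all} homogeneous elements, both parities.) For $(2)\Rightarrow(3)$: since $1 \in R = tr(P) = \sum_g g(P)$, finitely many homogeneous functionals $g_1,\dots,g_m$ (even) and $g_{m+1},\dots,g_{m+n}$ (odd) and elements $c_k \in P$ suffice to write $1 = \sum_k g_k(c_k)$. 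I would assemble the map $\Phi: \big(\bigoplus_{k\le m} P\big) \oplus \big(\bigoplus_{k>m} \Pi P\big) \to R$ whose $k$-th component is $g_k$ for $k \le m$ and $g_k\Pi$ (the even morphism associated to the odd functional $g_k$) for $k > m$; by construction $1$ is in the image of the even morphism $\Phi$, hence $\Phi$ is a split epimorphism of right $R$-modules (a right-module morphism onto $R$ hitting $1$ splits by sending $1 \mapsto$ a preimage and extending $R$-linearly), so $R$ is a direct summand. The implication $(3)\Rightarrow(4)$ is trivial.

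For $(4)\Rightarrow(5)$: given $R$ a direct summand of $Q := \bigoplus_i P \oplus_j \Pi P$, and given arbitrary $M \in \mathfrak{M}_R$, choose a free module $F = \bigoplus_\lambda R$ surjecting onto $M$ (every module is a quotient of a free one, using homogeneous generators of both parities — here one may need $\Pi R \cong R$... actually $R$ already has elements of both parities so a free module with sufficiently many even and odd basis elements works, or one notes $\Pi R$ is itself free of rank $0|1$ and is a summand of $R \oplus \Pi R$, which is a summand of $Q \oplus \Pi Q$, again of the required form). Replacing each copy of $R$ in $F$ by a copy of $Q$ (of which it is a summand) gives an epimorphism from a direct sum of copies of $Q$ — i.e. from a direct sum of copies of $P$ and $\Pi P$ — onto $M$. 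The last step $(5)\Rightarrow(1)$: suppose $f: M \to N$ is an $R$-morphism with $f_* = 0$ on $\underline{Hom}_R(P,-)$; I claim $f = 0$. Take any homogeneous $x \in M$; I want $f(x) = 0$. By (5) there is an epimorphism $q: Q' \to M$ with $Q'$ a sum of copies of $P$ and $\Pi P$, so $x = q(y)$ for some homogeneous $y$, and restricting $q$ to the single summand containing a suitable lift gives a homogeneous homomorphism from $P$ or $\Pi P$ hitting $x$; precomposing with the odd iso $\cdot\Pi$ if that summand is $\Pi P$, we get $p \in \underline{Hom}_R(P, M)$ (possibly after a parity shift, which is harmless since $\underline{Hom}$ contains maps of both parities) with $x \in \mathrm{im}(p)$. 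Then $f_*(p) = f \circ p = 0$ gives $f(x) = 0$ for all homogeneous $x$, hence $f = 0$.

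The main obstacle I anticipate is the bookkeeping in $(5)\Rightarrow(1)$ and $(2)\Rightarrow(3)$ around odd homomorphisms: the functor $\underline{Hom}_R(P,-)$ is valued in $(SAb)$ and faithfulness must be used for elements of \emph{both} parities, and the translation between "an odd functional/homomorphism out of $P$" and "an even morphism out of $\Pi P$" via Proposition~\ref{prop:parity change} has to be invoked cleanly each time, with the sign conventions for $f\Pi$ and $\Pi f$ (and the identity $\Pi(\Pi M) = M$) applied correctly. There is a minor subtlety in $(4)\Rightarrow(5)$ in ensuring the free module one starts with is itself covered by copies of $P \oplus \Pi P$; I would handle this by the observation that $R$ and $\Pi R$ are both direct summands of $Q \oplus \Pi Q$, so a free module on generators of either parity is covered, and $\Pi Q$ is again of the form $\oplus P \oplus \Pi P$ since $\Pi(\Pi P) = P$. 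None of the individual verifications is deep; the care is entirely in the parity signs, which is exactly where this super version departs from \cite{Lam}.
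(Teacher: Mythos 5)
Your proof follows exactly the same cyclic chain $(1)\Rightarrow(2)\Rightarrow(3)\Rightarrow(4)\Rightarrow(5)\Rightarrow(1)$ as the paper, invokes the same key tool (the odd isomorphism $\cdot\Pi$ from Proposition~\ref{prop:parity change} to convert odd functionals/homomorphisms out of $P$ into even morphisms out of $\Pi P$), and correctly identifies parity bookkeeping as the essential new content beyond the ungraded case. Two spots need tightening to make it airtight, both of the kind you flag in advance as ``care in the parity signs.''

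In $(2)\Rightarrow(3)$ you write that a right-module morphism onto $R$ hitting $1$ splits by sending $1$ to a preimage; in the super setting this splitting must be an \emph{even} morphism, so the preimage $(c_1,\dots,c_{m+n})$ must be even in $\big(\bigoplus_{k\le m} P\big)\oplus\big(\bigoplus_{k>m}\Pi P\big)$, i.e.\ $c_k$ must be even in $P$ for $k\le m$ and odd in $P$ for $k>m$. As written, you have not arranged this; the paper handles it explicitly by taking homogeneous components of the equation $1=\sum_k g_k(c_k)$. Without this step the proposed splitting need not be parity-preserving. In $(5)\Rightarrow(1)$, the phrase ``restricting $q$ to the single summand containing a suitable lift \dots hitting $x$'' is not quite right: a homogeneous $x$ is $q(y)$ with $y$ a finite sum over several summands, so no single $q_k$ need hit $x$. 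The conclusion is still fine --- $f\circ q_k=0$ for every $k$ (using the $\Pi$-shift on the $\Pi P$ summands), hence $f(x)=\sum_k f(q_k(y_k))=0$ --- and the paper sidesteps the issue entirely by arguing the contrapositive ($f\neq 0$ forces some single $f\circ q_k\neq 0$), which is slightly cleaner. Neither point is a wrong idea; both are one-line repairs consistent with the approach you already laid out.
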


\begin{proof}
1) $\implies$ 2). Suppose $I := tr(P) \neq R$. Then the quotient map
$R \to R/I$ is nonzero in $\mathfrak{M}_R$, hence by the hypothesis
that $P$ is a generator, there is some $g \in \underline{Hom}_R(P,
R)$ such that $P \xrightarrow{g}R \to R/I$ is nonzero. But then
$g(P) \nsubseteq I$, contradicting the definition of $g$.

\noindent 2) $\implies$ 3). By 2), there exist $f_1, \dotsc, f_m,
g_1, \dotsc, g_n$, with $f_i$ even and $g_j$ odd, such that $\sum_i
f_i (p_i) + \sum g_j(q_j) = 1$. By taking homogeneous components of
this equation, we may assume that the $p_i$ are even, $q_j$ are odd.
Then $(f_1, \dotsc, f_m, g_1 \Pi, \dotsc, g_n \Pi): P \oplus \dotsc
\oplus P \oplus \Pi P \oplus \dotsc \oplus \Pi P \to R$ is a split
epimorphism, with splitting given by $1 \mapsto (p_1, \dotsc, p_m,
q_1, \dotsc, q_n)$, whence 3).

\noindent 3) $\implies$ 4). Tautological.

\noindent 4) $\implies$ 5). Follows easily, since $M$ is an
epimorphic image of a free module.

\noindent 5) $\implies$ 1). Suppose $f: M \to N$ is a nonzero
morphism. By 5) there exists an epimorphism $\oplus_i P \oplus_j \Pi
P \to M$. The composition $\oplus_i P \oplus_j \Pi P \to M
\xrightarrow{f} N$ is clearly nonzero. Hence, either for some $i$,
$P_i = P \xrightarrow{g} M \xrightarrow{f} N$ is nonzero, or for
some $j$, $\Pi P_j = \Pi P \xrightarrow{h} M \xrightarrow{f} N$ is
nonzero. But by Prop. \ref{prop:parity change}, $h: \Pi P \to M$ may
be regarded as a homomorphism $h \Pi: \Pi(\Pi P) = P \to M$ of the
opposite parity, and the composition $P \xrightarrow{h \Pi} M
\xrightarrow{f} N$ is nonzero. In either case, we have proven that
$\underline{Hom}_R(P, -)$ is faithful, as desired.

\end{proof}

\noindent {\bf Remark.} Just as Lam points out in \cite{Lam} for the
classical case, the notions of finitely generated projective module
and generator for a super ring are complementary: $P$ is finitely
generated projective iff $P$ is a direct summand of $R^{m|n}$ for
some $m,n$; $P$ is a generator iff $R$
(regarded naturally as a free $R$ module of rank $1|0$ with basis
$\{1\}$) is a direct summand of $P^{m|n} = P^m \oplus (\Pi P)^n$ for
some $m,n$. This suggests that combining the two conditions will
yield an interesting notion:

\begin{defn}
An $R$-module $P$ is a {\it progenerator} iff it is a finitely
generated projective generator.
\end{defn}

Just as in the ungraded case, it is the concept of progenerator
which is crucial to the theory of Morita equivalences.

We will need to show that being a progenerator is a categorical
property. More precisely:

\begin{prop}\label{prop:progenerator categorical}
Let $R$, $S$ be super rings, and $F: \mathfrak{M}_R \to \mathfrak{M}_S$ an equivalence of
categories. If $P$ is a progenerator in $\mathfrak{M}_R$, then
$F(P)$ is a progenerator in $\mathfrak{M}_S$.
\end{prop}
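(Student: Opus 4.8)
The plan is to characterize the progenerator property of $P$ purely in terms of the categorical structure of $\mathfrak{M}_R$, so that any equivalence of categories automatically transports it. The key observation is that Proposition~\ref{prop:progenerator categorical} would follow immediately from three ingredients: (i) a categorical characterization of ``finitely generated projective,'' (ii) a categorical characterization of ``generator,'' and (iii) the fact that the parity-reversal functor $\Pi$ is itself transported by any equivalence, since the diversion from the classical theory in our setting is precisely the appearance of $\Pi P$ alongside $P$ in Proposition~\ref{prop:generator}.

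First I would recall that by Proposition~1.6 (the direct-summand characterization of projectives), $P$ is finitely generated projective iff $P$ is a direct summand of $R^{m|n} = R^m \oplus (\Pi R)^n$ for some $m, n$; and by Proposition~\ref{prop:generator}(3), $P$ is a generator iff $R$ is a direct summand of $P^m \oplus (\Pi P)^n$ for some $m, n$. Both conditions are phrased in terms of finite direct sums, direct summands (i.e., split mono/epi pairs, equivalently retracts), and the objects $R$ and $\Pi R$, together with the functor $\Pi$. Finite direct sums and retracts are manifestly preserved by any equivalence $F$, so the only nontrivial point is to handle $\Pi$ and the object $R$. For the object $R$: one does not actually need $F(R)$ to be $S$ itself — it suffices that $F(R)$ is a progenerator in $\mathfrak{M}_S$, which we may prove first as the special case $P = R$, using that $R$ is visibly a progenerator (it is free of rank $1|0$, hence f.g. projective, and $tr(R) = R$ since $\mathrm{id}_R \in R^*$, hence a generator). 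Actually the cleanest route: show directly that $P$ f.g. projective $\Rightarrow F(P)$ f.g. projective and $P$ generator $\Rightarrow F(P)$ generator, bootstrapping through the case $P = R$.

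The crux, then, is the compatibility of $F$ with parity reversal. I would argue that for an equivalence $F: \mathfrak{M}_R \to \mathfrak{M}_S$ there is a natural isomorphism $F \circ \Pi \cong \Pi \circ F$. The idea is that $\Pi$ on $\mathfrak{M}_R$ is characterized, up to natural isomorphism, by a \emph{universal} categorical property: by Proposition~\ref{prop:parity change}, $\Pi$ represents an odd natural transformation, or more usefully, $\Pi$ is the unique (up to iso) autoequivalence of $\mathfrak{M}_R$ admitting an odd natural isomorphism $M \to \Pi M$ — concretely, multiplication by a formal odd unit. One can phrase this via the internal $\underline{\mathrm{Hom}}$: there is an odd natural isomorphism $\underline{\mathrm{Hom}}_R(M, N) \cong \underline{\mathrm{Hom}}_R(M, \Pi N)$, and an equivalence of categories of $R$-modules, being enriched over $(SAb)$ in the sense that it preserves the $\mathbb{Z}_2$-grading on internal Hom (this needs a brief justification — an equivalence preserves the subobject/quotient structure hence the even part of internal Hom is categorical Hom, and the full internal Hom is recovered as $\mathrm{Hom}(M,N) \oplus \mathrm{Hom}(M, \Pi N)$ once $\Pi$ is known), must intertwine the two constructions.

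The hard part will be pinning down the precise sense in which $\Pi$ is ``categorical'' — a bare equivalence $F$ of the underlying ordinary categories $\mathfrak{M}_R, \mathfrak{M}_S$ (morphisms $=$ even homomorphisms only) need not a priori know anything about the odd homomorphisms, hence need not commute with $\Pi$. I expect one of two resolutions. Either: (a) one restricts attention, as is standard in super Morita theory, to equivalences that are $(SAb)$-enriched (equivalently, compatible with $\Pi$ by fiat), in which case the statement is essentially tautological once (i) and (ii) above are assembled; or (b) one shows that in fact $\Pi$ \emph{is} recoverable from the plain category $\mathfrak{M}_R$ — for instance because $R^{0|1} = \Pi R$ is the unique (up to iso) object $Q$ such that $Q$ is a progenerator, $Q \not\cong R$, and $Q \oplus Q \cong R \oplus R$ fails while $\underline{\mathrm{End}}(Q) \cong R$... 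This is delicate, and I would flag it explicitly. Assuming the ambient convention that Morita equivalences are understood to respect the super structure (which is implicit in the paper's framework and will be made precise in Section~3), the proof reduces to: $F$ preserves finite direct sums and retracts; $F$ commutes with $\Pi$ up to natural isomorphism; hence $F$ carries the defining direct-summand conditions for ``f.g.\ projective'' and ``generator'' of $P$ to the corresponding conditions for $F(P)$, using that $F(R)$ is a progenerator (the case $P = R$). Therefore $F(P)$ is a progenerator in $\mathfrak{M}_S$, as claimed.
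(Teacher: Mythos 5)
Your proposal takes a genuinely different route from the paper's. The paper characterizes each of the three properties by a condition that is (or is claimed to be) manifestly categorical: projectivity via exactness of the \emph{categorical} $Hom_R(P,-)$, finite generation via the ``finite subfamily'' condition on sums of submodules exhausting $P$, and the generator property via faithfulness of the \emph{internal} $\underline{Hom}_R(P,-)$; it then simply asserts that each is preserved by an equivalence. You instead work with the direct-summand characterizations (Prop.~1.1.2 and Prop.~\ref{prop:generator}(3)), which forces you to transport both the object $R$ and the functor $\Pi$ across $F$. For projectivity and finite generation your route is noticeably heavier than the paper's, since the paper's two conditions reference neither $R$ nor $\Pi$ and so transport trivially; this is a real advantage of the paper's formulation that your sketch forgoes.

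That said, the subtlety you flag about $\Pi$ is genuine, and your proposal is the more honest about it. Your proof as written is incomplete: you explicitly leave open whether a bare equivalence $F$ of the ordinary categories $\mathfrak{M}_R$, $\mathfrak{M}_S$ satisfies $F\Pi \cong \Pi F$, and without that your argument for ``generator'' does not close. But note that the paper's own generator step is vulnerable to the same objection: $\underline{Hom}_R(P,-)$ is the internal $\underline{Hom}$, whose odd part is $Hom_R(\Pi P,-)$, so the assertion that its faithfulness is ``preserved by category equivalences'' tacitly presupposes that $F$ knows about $\Pi$ (equivalently, about the $(SAb)$-enrichment) — exactly the point you raise. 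So you have not produced a complete proof, but you have correctly isolated the one place where the argument is not purely formal; a fully rigorous version, by either route, must supply the missing ingredient — either a proof that any equivalence $\mathfrak{M}_R \to \mathfrak{M}_S$ intertwines the parity shifts up to natural isomorphism, or a reformulation of ``generator'' that refers only to the bare category.
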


\begin{proof}
Suppose $P$ is a progenerator in $\mathfrak{M}$. The statement that $P$ is projective is equivalent to:\\

\noindent {\it The functor $Hom_R(P, -)$ is exact.}\\

Since $F$ is an equivalence of categories, it takes short exact
sequences in $\mathfrak{M}_R$ to short exact sequences in
$\mathfrak{M}_S$, hence $F(P)$ is projective if $P$ is projective.

One checks that the statement that $P$ is finitely generated is equivalent to:\\

\noindent {\it For any family of submodules $\{N_i : i \in I\}$ of
$P$, if $\sum_{i \in I} N_i = M$, then $\sum_{i \in J} N_i = M$ for
some finite subset $J \subseteq I$.}\\

This is clearly preserved by a category equivalence, so $F(P)$ is
finitely generated if $P$ is finitely generated. Finally, the statement that $P$ is a generator is:\\

\noindent {\it The functor $\underline{Hom}_R(P, -)$ is faithful.}\\

This is again preserved by category equivalences, so $F(P)$ is a
generator if $P$ is a generator.
\end{proof}

\noindent {\bf Remark.} As in the previous section, all definitions and theorems that were stated for right $R$-modules also hold for categories ${}_R \mathfrak{M}$ of left $R$-modules. Proofs are left to the reader.\\

\section{Super Morita theory}

\subsection{The super Morita context.}

Let $R$ be a super ring, $P$ a right $R$-module, and $Q := P^* =
\underline{Hom}_R(P, R)$, $S := \underline{End}_R(P)$, with $Q, S$
both acting on the left on $P$. Thus $P$ becomes an $(S,
R)$-bimodule.

As in the classical case, we define the left action of $R$ on $Q$ by
$(rq)(p) := r(qp)$ (``$RQP$-associativity") and the right action of
$S$ by $(qs)(p) := q(sp)$ (``$QSP$-associativity"), thus making $Q$
an $(R, S)$ bimodule in a natural way.

There are also several important pairings involving $Q$ and $P$.

\begin{lem}
Let $p, p' \in P, q, q' \in Q$. Define the pairings:

\begin{align*}
&Q \times P \to R\\
&(q, p) \mapsto qp := q(p)\\
\\
&P \times Q \to S\\
&(p, q) \mapsto pq\\
\end{align*}

\noindent where $(pq)(p') := p(qp')$. Then:\\

\noindent 1) $(q, p) \mapsto qp$ defines an $(R, R)$-morphism
$\alpha: Q \otimes_S
P \to R$;\\
\noindent 2) $(p, q) \mapsto pq$ defines an $(S, S)$-morphism
$\beta: P
\otimes_R Q \to S$.\\
\end{lem}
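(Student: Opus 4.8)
The plan is to verify directly that the two pairings descend to well-defined morphisms on the relevant tensor products and that they respect the appropriate bimodule structures, treating both as bilinear maps that are balanced over the middle ring. Throughout, the main technical burden will be bookkeeping the Koszul signs coming from the conventions fixed in Section 2, especially the sign $(rx)f = (-1)^{|r||f|}r(xf)$ for right-acting homomorphisms and the sign in the induced map $f \otimes g$.

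For part 1): I would first observe that $(q,p) \mapsto q(p)$ is additive in each variable and parity-additive (i.e. $|qp| = |q|+|p|$ for homogeneous elements), since $q$ is a homogeneous element of $\underline{Hom}_R(P,R)$ of parity $|q|$. Next I would check $S$-balancing: for $s \in S = \underline{End}_R(P)$ one needs $(qs)(p) = q(sp)$, which is exactly the definition of the right $S$-action on $Q$ (``$QSP$-associativity''), so the map factors through $Q \otimes_S P$. It remains to check $(R,R)$-linearity of the induced map $\alpha$. On the left: $\alpha(rq \otimes p) = (rq)(p) = r \cdot q(p) = r \cdot \alpha(q \otimes p)$ by the definition of the left $R$-action on $Q$ (``$RQP$-associativity''). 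On the right: $\alpha(q \otimes pr) = q(pr)$; since $q$ is $R$-linear as a right-module homomorphism, $q(pr) = q(p)r = \alpha(q \otimes p) \cdot r$, here I must make sure the sign convention is consistent — writing $q$ on the right, $q(pr) = (pr)q = (-1)^{|q||r|}$-type signs cancel because $q$ is genuinely an honest right $R$-module homomorphism in $\underline{Hom}_R(P,R)$, so no extra sign appears and $\alpha$ is a genuine $(R,R)$-bimodule map.

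For part 2): the structure is parallel. First, $(p,q) \mapsto pq$ where $(pq)(p') := p(q(p'))$ defines, for each homogeneous $p,q$, an element of $\underline{End}_R(P) = S$; I would check it is $R$-linear in $p'$ and homogeneous of parity $|p|+|q|$, and additive/bi-additive in $p$ and $q$. Then $R$-balancing: for $r \in R$ one needs $(pr)q = p(rq)$ as elements of $S$, i.e. $\bigl((pr)q\bigr)(p') = \bigl(p(rq)\bigr)(p')$ for all $p'$; unwinding both sides using $(rq)(p') = r(q(p'))$ and the right $R$-action $pr$ on $P$, together with the associativity $(pr)(q(p')) = \dots$, this reduces to a sign check, and here is where the Koszul sign from the definition of the right action and from $f \otimes g$ must match up — I expect a sign $(-1)^{|r|\cdot(\text{something})}$ that precisely cancels, so the map factors through $P \otimes_R Q$. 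Finally $(S,S)$-linearity of $\beta$: $\beta(sp \otimes q) = (sp)q$ and one checks $\bigl((sp)q\bigr)(p') = s\bigl((pq)(p')\bigr)$, i.e. $(sp)q = s \cdot (pq)$ in $S$, using that $s$ acts on the left of $P$; and $\beta(p \otimes qs) = p(qs)$ with $\bigl(p(qs)\bigr)(p') = (pq)(sp') = \bigl((pq)\cdot s\bigr)(p')$ using $(qs)(p') = q(sp')$, giving $\beta(p \otimes qs) = \beta(p\otimes q)\cdot s$.

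The main obstacle is purely the sign bookkeeping in part 2), specifically the $R$-balancing condition $(pr)q = p(rq)$: one must carefully track the sign in the right $R$-module structure on $P$ versus the left $R$-module structure on $Q$ and confirm they are mutually inverse so that the composite $(pq)$ lands well-definedly in $S$; the $(S,S)$-linearity and all of part 1) are then essentially formal once the conventions are pinned down. I would present part 1) in full and then remark that part 2) follows by the same argument with the roles of the pairings interchanged, leaving the sign verification to the reader as is done elsewhere in the paper.
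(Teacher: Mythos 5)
Your proof is correct and follows essentially the same route as the paper's: you verify $S$- (resp. $R$-) balancedness and then $(R,R)$- (resp. $(S,S)$-) linearity by invoking exactly the associativity identities the paper names ($QSP$, $RQP$, $QPR$ for $\alpha$; $PRQ$, $SPQ$, $PQS$ for $\beta$), together with parity preservation. The one place you flag a possible Koszul sign is in fact a non-issue: the convention $(rx)f = (-1)^{|r||f|}r(xf)$ applies only to \emph{left}-module homomorphisms written on the right, whereas $q \in \underline{Hom}_R(P,R)$ acts on the left of the right $R$-module $P$, so $q(pr) = q(p)r$ holds with no sign, and similarly none of the balancing or bimodule checks in part 2) introduce one.
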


\begin{proof}
As $P$ is an $(S, R)$-bimodule and $Q$ an $(R, S)$-bimodule, the
tensor product $Q \otimes_S P$ makes sense and is an $(R,
R)$-bimodule. $QSP$-associativity implies the $S$-bilinearity $(qs,
p) = (q, sp)$ of the first pairing, hence it induces a morphism (of
superabelian groups) $\alpha: Q \otimes_S P \to R$.

Now $\alpha$ is given by $q \otimes p \mapsto qp$. Since we have $|q
\otimes p| = |q| + |p| = |qp|$, we see $\alpha: Q \otimes_S P \to R$
preserves parity. That $\alpha$ is actually an $(R,R)$-morphism is a
consequence of $RQP$-associativity and $QPR$-associativity,
respectively. We have already discussed $RQP$-associativity $r(qp) =
(rq)p$, and $QPR$-associativity $(qp)r = q(pr)$ is a restatement of
the $R$-linearity of $q \in P^*$.

Similarly, the proof of 2) is an easy consequence of $PRQ$-, $SPQ$-,
and $PQS$-associativities, and is left to the reader. The only thing
to note is that $p \otimes q \mapsto pq$ is a parity preserving map
(the argument is the same as that given for $\alpha$), hence $\beta$
is a morphism of superabelian groups.
\end{proof}

\begin{defn}
The {\it super Morita context} associated to $P_R$ is the
6-tuple

\begin{equation*}
(R, P, Q, S; \alpha, \beta).
\end{equation*}
\end{defn}

\noindent {\bf Remark.} In order to carry out this discussion for left $R$-modules, we have to make the following conventions. If $P \in {}_R \mathfrak{M}$, then $S = \underline{End}({}_R P)$ still acts on the left. We convert the left $S$-action into a right $S^o$-action in the canonical way:
\begin{align*}
ps := (-1)^{|s||p|} s(p)
\end{align*}
so that $P$ becomes an $(R, S^o)$-bimodule. Homomorphisms in $Q =
\underline{Hom}(P, R)$ now act on $P$ on the right, as discussed in section 2. Then $Q$ becomes an $(S^o, R)$-bimodule, with the right
action of $R$ on $Q$ given by right multiplication in $R$, and
the left action of $S^o$ given by:

\begin{align*}
p(sq) := (ps)q\\
\end{align*}

In what follows, we could reformulate and reprove everything that we do for categories of right modules for categories of left modules. This is a lengthy exercise, but it requires nothing new except a change of notation, and in the author's view, little would be gained in so doing. (The reader is invited to provide these proofs to his own satisfaction). Instead, we use the category equivalence $\cdot^o$ to turn statements about left $R$-modules into statements about right $R^o$-modules, therefore reducing the results for left modules to the already-proven case of right modules.\\

We shall now proceed to prove some important facts about the super
Morita context.

\begin{prop}\label{prop:moritacontext1} Let $P \in \mathfrak{M}_R$, $(R, P, Q, S; \alpha, \beta)$ the super Morita context associated to $P$. Then:

\begin{enumerate}
\item $P$ is a generator iff $\alpha$ is onto.\\
\item Suppose $P$ is a generator. Then\\

\begin{enumerate}[a)]

\item $\alpha: Q \otimes_S P \to R$ is an $(R,R)$ isomorphism.\
\item $Q \cong \underline{Hom}_S(P, S)$ as $(R,S)$-bimodules.\
\item $P \cong \underline{Hom}_S(Q, S)$ as $(S,R)$-bimodules.\
\item $R \cong \underline{End}_S({}_S P)^o \cong \underline{End}_S(Q_S)$
as super rings.\\
\end{enumerate}
\end{enumerate}
\end{prop}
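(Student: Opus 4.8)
The plan is to model the argument on the classical case (Lam, Ch.~18), carefully inserting signs wherever the $\mathbb{Z}_2$-grading forces them, and, where possible, to reduce odd-parity bookkeeping to the even case using the parity-reversal isomorphisms of Proposition~\ref{prop:parity change}.

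\medskip

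For part (1): the pairing $\alpha\colon Q\otimes_S P\to R$ has image the two-sided ideal $\sum_{q} q(P)$ of $R$. Since by definition $tr(P)=\sum g(P)$ over homogeneous $g\in Q=P^*$, and since $Q$ is generated over $\mathbb{Z}$ by its homogeneous elements, the image of $\alpha$ equals $tr(P)$. So $\alpha$ is onto iff $tr(P)=R$, which by Proposition~\ref{prop:generator}, equivalence (1)$\iff$(2), is exactly the condition that $P$ is a generator. This part is essentially immediate.

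\medskip

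For part (2), assume $P$ is a generator. For (a): I would show $\alpha$ is injective. Since $\alpha$ is onto, pick homogeneous $q_i\in Q$, $p_i\in P$ with $\sum_i q_i p_i = 1$; taking homogeneous components, arrange that $|q_i|=|p_i|$. Then for any homogeneous $\xi=\sum_k q'_k\otimes p'_k\in Q\otimes_S P$ one writes $\xi = \xi\cdot 1 = \sum_{i,k} q'_k\otimes p'_k q_i p_i = \sum_{i,k}(\pm)\, q'_k(p'_k q_i)\otimes p_i$, where the sign comes from moving $p'_k q_i\in S$ across the tensor; using $S$-balancedness this expresses $\xi$ in terms of $\alpha(\xi)$. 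Concretely, $\xi = \sum_i (\pm)\,(\xi\text{-contracted-with-}p_i)\otimes \text{stuff}$; if $\alpha(\xi)=\sum_k q'_k p'_k = 0$ then one shows $\xi=0$. The signs are the main thing to get right here; the pattern is $\alpha(\xi)=0\Rightarrow \xi = \sum_i \mathbf{q}_i\otimes p_i$ with each $\mathbf{q}_i$ obtained from $\xi$ and the $q$'s, and collapsing via $S$-balance gives $0$. Since $\alpha$ is already known to be an $(R,R)$-morphism (from the Lemma), bijectivity upgrades it to an $(R,R)$-isomorphism.

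\medskip

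For (b): $Q = P^*$ carries commuting left $R$- and right $S$-actions; I claim the map $Q\to\underline{Hom}_S({}_SP, {}_SS)$, $q\mapsto(p\mapsto pq)$ (using the pairing $\beta$ componentwise, i.e. $p\mapsto pq\in P\otimes\text{-free... }$, actually $p\mapsto$ the element of $S$ given by $p'\mapsto p(qp')$) — more precisely $q\mapsto \lambda_q$ where $\lambda_q(p):= pq \in S$ — is the desired $(R,S)$-bimodule isomorphism. One checks $\lambda_q$ is left-$S$-linear using $SPQ$- and $PQS$-associativities, that $q\mapsto\lambda_q$ respects the $R$- and $S$-actions (with appropriate signs, since $\lambda$ is parity-preserving once one checks $|\lambda_q|=|q|$), and bijectivity follows from part (a): given $\varphi\in\underline{Hom}_S(P,S)$, write $1=\sum q_i p_i$ and set $q_\varphi := \sum_i q_i\cdot\varphi(p_i)$ wait — rather one uses the isomorphism $\alpha$ together with $\beta$ to produce a two-sided inverse. (c) is dual to (b) with the roles of $P$, $Q$, $R$, $S$ interchanged, noting $P\cong Q^*$ correspondingly, so I would just indicate the symmetry.

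\medskip

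For (d): the two-sided $(R,R)$-isomorphism $\alpha$ gives $R\cong Q\otimes_S P$ as rings where the ring structure on the right is via composition of pairings; one then identifies $Q\otimes_S P$ with $\underline{End}_S({}_SP)^o$ via $q\otimes p\mapsto (p'\mapsto p'\cdot(q\otimes p)$-action$)$, i.e. sending $q\otimes p$ to the endomorphism $p'\mapsto p(qp')$ — this is right-multiplication-like, hence the opposite; the $\mathbb{Z}_2$-sign in $R^o$ versus $R$ must be tracked, and this is where I expect the main obstacle: verifying that the natural map $R\to\underline{End}_S({}_SP)$ (acting on the right of $P$) has image exactly the $S$-endomorphisms and is an anti-isomorphism of super rings with the correct Koszul signs, i.e. that $\underline{End}_S({}_SP)\cong R^o$. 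Once that is done, applying $\cdot^o$ and part (b) gives $\underline{End}_S(Q_S)\cong R$ as well. Throughout, the recurring subtlety — and the one genuinely new feature compared to Lam — is that every ``associativity'' identity involving a homomorphism acting on the right (the $x f$ convention) carries a sign $(-1)^{|x||f|}$, so each pairing computation must be redone with signs; I would state the sign conventions once at the start of the proof and then proceed, leaving the most routine sign verifications to the reader as is done elsewhere in the paper.
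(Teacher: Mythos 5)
Your overall strategy coincides with the paper's: model the argument on Lam and track Koszul signs, using Props.~\ref{prop:generator} for (1) and the generating equation $1=\sum_i q_ip_i+\sum_j\tilde q_j\tilde p_j$ (homogeneous, even and odd parts) for (2). Parts (1), (2c), and (2d) are at least correct in outline. But there are two places in (2a) and (2b) where the sign-handling is not just unfinished but currently pointing in the wrong direction.

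In (2a), you attribute the expected $(\pm)$'s to ``moving $p'_kq_i\in S$ across the tensor.'' No sign arises there: $S$-balance $qs\otimes p=q\otimes sp$ over $\otimes_S$ is sign-free, and the entire injectivity computation goes through with every $(\pm)=+1$. (The paper left-multiplies by $1$ while you right-multiply; both work.) Expecting a sign where none appears is the sort of thing that derails a careful write-up of exactly this kind of lemma, so it's worth being explicit that the associativity identities of the Morita context hold on the nose.

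In (2b), the formula $\lambda_q(p):=pq$ is missing a Koszul sign. With the paper's convention $xf=f(x)$ and $(rx)f=(-1)^{|r||f|}r(xf)$, the correct definition is $p\,\lambda(q):=(-1)^{|p||q|}pq$; without the sign, $\lambda_q(sp)=(sp)q=s(pq)=s\lambda_q(p)$, which fails the identity $\lambda_q(sp)=(-1)^{|q||s|}s\,\lambda_q(p)$ required of an odd $\lambda_q$. Your surjectivity step also trails off; the explicit preimage is $f=\lambda\bigl(\sum_i q_i(p_if)+(-1)^{|f|}\sum_j\tilde q_j(\tilde p_jf)\bigr)$, which involves both the odd part of the generating equation and the sign $(-1)^{|f|}$, neither of which appears in your partial formula $q_\varphi=\sum_i q_i\cdot\varphi(p_i)$. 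Once these two sign issues are fixed, your proposal becomes essentially the paper's proof.
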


\begin{proof}
1) follows from Prop. \ref{prop:generator}. For 2), suppose $P_R$ is
a generator, then we have an equation $1 = \sum_i q_i p_i + \sum_j
\tilde{q}_j \tilde{p}_j$ where $q_i \in Q, p_i \in P$ are even and
$\tilde{q}_j \in Q, \tilde{p}_j \in P$ are odd. For 2a), suppose
$\sum_k q'_k \otimes p'_k$ is homogeneous, and that $\alpha(\sum_k
q'_k \otimes p'_k) = \sum_k q'_k p'_k = 0$. Then

\begin{align*}
\sum_k q'_k \otimes p'_k &= \sum_{k, i, j} (q_i p_i + \tilde{q}_j \tilde{p}_j) q'_k \otimes p'_k\\
&= \sum_{k, i, j} \left[ q_i (p_i q'_k) + \tilde{q}_j (\tilde{p}_j q'_k) \right] \otimes p'_k\\
&= \sum_{i, k} \left[ q_i \otimes (p_i q'_k) p'_k \right] + \sum_{j,k} \tilde{q}_j \otimes (\tilde{p}_j q'_k) p'_k\\
&= \sum_i q_i \otimes p_i \left( \sum_k q'_k p'_k \right) + \sum_j \tilde{q}_j \otimes \tilde{p}_j \left( \sum_k q'_k p'_k \right)\\
&=0\\
\end{align*}

To prove 2b), we define a map $\lambda: Q \to \underline{Hom}_S({}_S
P, {}_S S)$ by $p \, \cdot \lambda(q) := (-1)^{|p||q|} pq \in S$. That $\lambda(q)$ is parity-preserving is clear: $| p \cdot
\lambda(q)| = |pq| = |p| + |q|$. That $\lambda(q) \in
\underline{Hom}_S(P, S)$ is a consequence of $SPQ$-associativity
$(sp)q = s(pq)$:

\begin{align*}
(sp) \cdot \lambda(q) &= (-1)^{|sp||q|}(sp)q\\
&=(-1)^{(|s|+|p|)|q|} (sp)q\\
&=(-1)^{(|s|+|p|)|q|} s(pq)\\
&=(-1)^{|s||q|} s(p \cdot \lambda(q))\\
&=(-1)^{|s||\lambda(q)|} s (p \cdot \lambda(q))\\
\end{align*}

Hence $\lambda$ is an even homomorphism. We now show that $\lambda$
is injective: suppose $pq = 0$ for all $p \in P$. Then, since $1_R =
\sum_i q_i p_i + \sum_j \tilde{q}_j \tilde{p}_j$, we have:

\begin{align*}
q &= 1_R q\\
&= [\sum_i q_i p_i + \sum_j \tilde{q}_j \tilde{p}_j]q\\
&= \sum_i q_i (p_i q) + \sum_j \tilde{q}_j (\tilde{p}_j q)\\
&= \sum_i q_i (0) + \sum_j \tilde{q}_j (0)\\
&=0\\
\end{align*}

We now prove that $\lambda$ is surjective: suppose $f \in
\underline{Hom}_S(P, S)$. We have:

\begin{align*}
pf &= [p \left(\sum_i q_i p_i + \sum_j \tilde{q}_j \tilde{p}_j \right)] f\\
&= \sum_i ((pq_i)p_i)f + \sum_j ((p\tilde{q}_j) \tilde{p}_j)f\\
&= \sum_i (-1)^{|pq_i||f|}(pq_i)(p_i f) + \sum_j (-1)^{|p \tilde{q}_j||f|} (p\tilde{q}_j)(\tilde{p}_j f)\\
&= p \left( \sum_i (-1)^{|pq_i||f|} q_i (p_i f) + \sum_j (-1)^{(|p \tilde{q}_j||f|} \tilde{q}_j (\tilde{p}_j f) \right)\\
&= p \left( \sum_i (-1)^{|p||f|} q_i (p_i f) + \sum_j (-1)^{(|p|+1)|f|} \tilde{q}_j (\tilde{p}_j f) \right)\\
\end{align*}

\noindent Hence $f = \lambda \left( \sum_i q_i (p_i f) + (-1)^{|f|}
\sum_j \tilde{q}_j (\tilde{p}_j f) \right)$, and so
$\lambda$ is an isomorphism. This proves 2b).

We define super ring homomorphisms

\begin{align*}
\sigma: R \to \underline{End}({}_S P)^o \hspace{5mm} \text{ and} \hspace{5mm}  \tau: R \to \underline{End}(Q_S)\\
\end{align*}

\noindent by $p \sigma(r) := (-1)^{|p||r|} pr$ and $\tau(r)q := rq$.

The proof that $\sigma(r) \in \underline{End}({}_S P)$ is just like
the proof for $\lambda$; the proof that $\tau \in
\underline{End}(Q_S)$ is trivial.

Both $\sigma$ and $\tau$ preserve parity:

\begin{align*}
|p \sigma(r)| &= |(-1)^{|p||r|} pr|  \\
&= |p| + |r|\\
|\tau(r)q| &= |rq|\\
&= |q| + |r|\\
\end{align*}

The proof that $\tau$ is a super ring homomorphism is a triviality.
For the case of $\sigma$, we only note that since we are allowing
$\underline{End}({}_S P)$ to act on the right instead of the left,
we really have a super ring homomorphism from $R$ to
$\underline{End}({}_S P)^o$, as the reader will easily verify.

Hence $\sigma$ and $\tau$ are super ring morphisms. The proof that
$\sigma, \tau$ are isomorphisms is similar to the proof that
$\lambda$ is an isomorphism and is left to the reader.
\end{proof}

The following proposition is complementary to the one just proved;
it applies to finitely generated projective modules.

\begin{prop}\label{prop:moritacontext2}
Let $P \in \mathfrak{M}_R$, $(R, P, Q, S; \alpha, \beta)$ the super Morita context associated to $P$. Then:\\

\begin{enumerate}
\item $P$ is a finitely generated projective module iff $\beta$ is onto.\\
\item Suppose $P$ is a finitely generated projective module. Then:\\

\begin{enumerate}[a)]
\item $\beta: P \otimes_R Q \to S$ is an $(S,S)$ isomorphism.\
\item $Q \cong \underline{Hom}_R(P_R, R_R)$ as $(R,S)$-bimodules.\
\item $P \cong \underline{Hom}_R({}_R Q, {}_R R)$ as $(S,R)$-bimodules.\
\item $S \cong \underline{End}(P_R) \cong \underline{End}({}_R Q)^o$
as super rings.
\end{enumerate}
\end{enumerate}
\end{prop}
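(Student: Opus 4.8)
The plan is to mirror the proof of Proposition \ref{prop:moritacontext1}, exploiting the near-perfect symmetry of the super Morita context: $Q \otimes_S P \to R$ plays for a generator the role that $P \otimes_R Q \to S$ plays for a finitely generated projective module. Concretely, for part (1), I would invoke the super Dual Basis Lemma (Prop. \ref{lem:dualbasis}): $P$ is finitely generated projective iff there exist finitely many homogeneous $a_i, b_j \in P$ (with $a_i$ even, $b_j$ odd) and dual functionals $f_i, g_j \in Q = P^*$ such that $c = \sum_i a_i f_i(c) + \sum_j b_j g_j(c)$ for all $c \in P$. Unwinding the definition of $\beta$, one has $(\sum_i a_i \otimes f_i + \sum_j b_j \otimes g_j)$ mapping under $\beta$ to the endomorphism $c \mapsto \sum_i a_i(f_i c) + \sum_j b_j(g_j c) = c = \mathrm{id}_P$, so $\beta$ hits $1_S$; conversely, if $\beta$ is onto then $1_S = \beta(\sum_k p_k \otimes q_k)$ for some finite homogeneous sum, and taking homogeneous components gives exactly a dual basis, so $P$ is finitely generated projective.

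For part (2), having $1_S = \sum_i p_i q_i + \sum_j \tilde p_j \tilde q_j$ with $p_i, q_i$ even and $\tilde p_j, \tilde q_j$ odd (the analogue of the generator's $1_R = \sum q_i p_i + \sum \tilde q_j \tilde p_j$), each claim follows by the same manipulations as in Prop. \ref{prop:moritacontext1}, with the roles of $P$ and $Q$, and of $R$ and $S$, interchanged. For (2a), injectivity of $\beta$: if $\beta(\sum_k p'_k \otimes q'_k) = 0$, insert $1_S$ on the left of $p'_k$ and slide associativities ($SPQ$-, $PQS$-, $PQP$-associativity) to rewrite the tensor as $\sum_i p_i \otimes q_i(\sum_k p'_k q'_k) + \sum_j \tilde p_j \otimes \tilde q_j(\cdots) = 0$, using that $\sum_k p'_k q'_k$ is the zero endomorphism applied appropriately; surjectivity is part (1). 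For (2b), define $\mu: Q \to \underline{Hom}_R(P_R, R_R)$ by $\mu(q) := q$ itself (an element of $Q = P^* = \underline{Hom}_R(P, R)$ literally is such a homomorphism), or more precisely track signs so that $p \cdot \mu(q) = (-1)^{|p||q|} qp$; but in fact $Q$ was \emph{defined} as $\underline{Hom}_R(P, R)$, so the content of (2b) is only the identification of bimodule structures, which is immediate. For (2c), $P \cong \underline{Hom}_R({}_R Q, {}_R R)$ by the map $p \mapsto (q \mapsto \pm qp)$, with injectivity and surjectivity proved exactly as for $\lambda$ in the previous proposition using the expression for $1_S$. For (2d), define $\sigma': S \to \underline{End}(P_R)$ and $\tau': S \to \underline{End}({}_R Q)^o$ by $\sigma'(s) := s$ (since $S = \underline{End}_R(P)$ by definition, this is tautological up to checking it is the full endomorphism ring, which is part (1)) and $\tau'(s) q := (-1)^{|s||q|}$ or appropriate sign times $q \circ$ the action of $s$; again the only real work is checking the sign conventions make these ring (anti)isomorphisms, done as for $\sigma, \tau$ before.

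I would state explicitly that everything reduces to the associativity identities ($PRQ$-, $SPQ$-, $PQS$-, $PQP$-, $QPR$- associativities) already catalogued, together with the key expansion of $1_S$, and that the sign bookkeeping is identical in structure to that in the proof of Prop. \ref{prop:moritacontext1} with $(P, Q, R, S) \rightsquigarrow (Q, P, S, R)$ in the appropriate places. The main obstacle I anticipate is not conceptual but notational: the super Morita context is not perfectly symmetric because $S$ acts on $P$ on the left while $R$ acts on the right, so the "dual" statements acquire opposite-ring twists in slightly different spots than one's first guess — in particular (2d) gives $S \cong \underline{End}(P_R)$ on the nose but $\underline{End}({}_R Q)^o$ with an opposite, precisely the mirror of the $\underline{End}({}_S P)^o$ versus $\underline{End}(Q_S)$ asymmetry in the earlier proposition. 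So the hard part will be getting these opposite-ring placements and the associated Koszul signs exactly right; once the dual-basis expansion of $1_S$ is in hand, each individual verification is a short computation that can safely be left to the reader, as the paper does for the analogous claims.
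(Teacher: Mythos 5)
Your proposal is correct and follows essentially the same route as the paper: invoke the super Dual Basis Lemma to identify surjectivity of $\beta$ (i.e.\ $1_S = \sum p''_l q''_l + \sum \tilde p''_m \tilde q''_m$) with finite generation and projectivity of $P_R$, then run the Prop.~\ref{prop:moritacontext1} manipulations with the roles of $(P,Q,R,S)$ and $(Q,P,S,R)$ swapped, defining $\lambda'$, $\sigma' = \mathrm{id}$, and $\tau'$ with the indicated Koszul signs. One minor wobble: in (2b) the map $Q \to \underline{Hom}_R(P_R, R_R)$ is literally the identity with no sign twist (the sign $(-1)^{|q||p|}$ you momentarily reach for belongs to $\lambda'$ in (2c)); you catch this yourself in the next clause, so the argument as a whole is sound.
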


\begin{proof}

1) $\beta$ is onto iff there is an equation $1_S = \sum_l p''_l
q''_l + \sum_m \tilde{p}''_m \tilde{q}''_m$.

The super Dual Basis Lemma (Prop. \ref{lem:dualbasis}) states that
this is completely equivalent to $P$ being finitely generated
projective. The proof of 2) is completely analogous to the proof of
the previous proposition, using the equation $1_S = \sum_l p''_l
q''_l + \sum_m \tilde{p}''_m \tilde{q}''_m$.

The only things that need to be noted are the following: the
homomorphism $\lambda': P \to \underline{Hom}_R({}_R Q, {}_R R)$ is
given by $q \lambda'(p) := (-1)^{|q||p|} qp$, which is clearly
parity preserving.

The map $Q \to \underline{Hom}_R(P_R, R_R)$ is just the identity,
hence a parity preserving homomorphism. We also need to define ring
homomorphisms:

\begin{align*}
\sigma': S \to \underline{End}(P_R) \hspace{5mm} \text{ and} \hspace{5mm} \tau': S \to \underline{End}({}_R Q)^o
\end{align*}

\noindent which are parity-preserving. This is obvious for $\sigma' := id$. We define $\tau'$ by $q \cdot \tau'(s) :=
(-1)^{|q||s|} qs$, which is clearly parity-preserving. The proof that $\tau'$ is an isomorphism proceeds as before.
\end{proof}

\subsection{The super Morita theorems.}

Finally, we come to the main results of this note. The first
(``super Morita I") states that tensoring with an $R$-progenerator $P$
defines an equivalence of categories between $\mathfrak{M}_R$ (resp.
${}_R \mathfrak{M}$) and $\mathfrak{M}_S)$ (resp ${}_S
\mathfrak{M}$, where $S = \underline{End}_R(P)$.

\begin{thm}\label{thm:moritaI}
Let $R$ be a super ring, $P_R$ a progenerator, and $(R, P, Q, S;
\alpha, \beta)$ the super Morita context associated with $P_R$.
Then:

\begin{enumerate}

\item $ - \otimes_R Q: \mathfrak{M}_R \to \mathfrak{M}_S$ and $-
\otimes_S P: \mathfrak{M}_S \to \mathfrak{M}_R$ are mutually
inverse category equivalences.

\item $P \otimes_R -: {}_R \mathfrak{M} \to {}_S \mathfrak{M}$
and $Q \otimes_S -: {}_S \mathfrak{M} \to {}_R \mathfrak{M}$ are
mutually inverse category equivalences.

\end{enumerate}

\end{thm}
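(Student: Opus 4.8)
The plan is to establish the two mutually-inverse equivalences in part (1) by exhibiting natural isomorphisms of the composite functors with the identity, and then to deduce part (2) either by a parallel argument or — more economically — by transport via the opposite-ring equivalence $\cdot^o$. Since $P_R$ is a progenerator, both Propositions \ref{prop:moritacontext1} and \ref{prop:moritacontext2} apply: in particular $\alpha\colon Q\otimes_S P\to R$ is an $(R,R)$-bimodule isomorphism and $\beta\colon P\otimes_R Q\to S$ is an $(S,S)$-bimodule isomorphism. These two isomorphisms are the engine of the whole proof.

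First I would verify that $-\otimes_R Q\colon\mathfrak{M}_R\to\mathfrak{M}_S$ and $-\otimes_S P\colon\mathfrak{M}_S\to\mathfrak{M}_R$ are well-defined functors: $Q$ is an $(R,S)$-bimodule and $P$ an $(S,R)$-bimodule, so for $M\in\mathfrak{M}_R$ the object $M\otimes_R Q$ is a right $S$-module, and symmetrically; functoriality on morphisms uses the sign rule for $f\otimes g$ recorded in Section 2, and one checks these induced maps are parity-preserving. Next, for the composite $(-\otimes_R Q)\otimes_S P\cong -\otimes_R(Q\otimes_S P)$ I would invoke associativity of the super tensor product (which holds with the usual Koszul signs, cf.\ \cite{DM}) and then apply $\mathrm{id}_M\otimes\alpha$ to get a natural isomorphism $M\otimes_R(Q\otimes_S P)\xrightarrow{\sim}M\otimes_R R\cong M$ in $\mathfrak{M}_R$; naturality in $M$ is automatic since everything in sight is built from natural transformations. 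Symmetrically, $(-\otimes_S P)\otimes_R Q\cong -\otimes_S(P\otimes_R Q)\xrightarrow{\mathrm{id}\otimes\beta} -\otimes_S S\cong \mathrm{id}_{\mathfrak{M}_S}$. Having these two natural isomorphisms of composites with the respective identity functors establishes that the two functors are mutually inverse equivalences, proving (1).

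For (2), I would first note that $P\otimes_R-\colon{}_R\mathfrak{M}\to{}_S\mathfrak{M}$ and $Q\otimes_S-\colon{}_S\mathfrak{M}\to{}_R\mathfrak{M}$ are again well-defined by the same bimodule bookkeeping, and that the composites are computed by exactly the same associativity-plus-$\alpha$/$\beta$ argument: $P\otimes_R(Q\otimes_S N)\cong(P\otimes_R Q)\otimes_S N\xrightarrow{\beta\otimes\mathrm{id}}S\otimes_S N\cong N$, and dually with $\alpha$. Alternatively, and more in the spirit of the Remark following the super Morita context, I would deduce (2) from (1) by observing that a left $R$-module is the same thing as a right $R^o$-module, that $P^o$ is an $R^o$-progenerator (being a progenerator is a categorical property by Proposition \ref{prop:progenerator categorical}, and $\cdot^o$ is an equivalence), that $\underline{End}_{R^o}(P^o)=S^o$ as super rings, and that under these identifications $P\otimes_R-$ corresponds to $-\otimes_{R^o}Q^o$ and $Q\otimes_S-$ to $-\otimes_{S^o}P^o$; then (1) applied to $P^o_{R^o}$ yields (2) directly.

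The main obstacle is bookkeeping rather than conceptual: one must check that the associativity isomorphism of the super tensor product and the isomorphisms $M\otimes_R R\cong M$, $N\otimes_S S\cong N$ are all isomorphisms \emph{in the relevant module categories} (i.e.\ parity-preserving and compatible with the remaining one-sided actions), and that composing them with $\mathrm{id}\otimes\alpha$ and $\mathrm{id}\otimes\beta$ really does land in the identity functor up to natural isomorphism — the sign conventions for $f\otimes g$ and for the $R^o$-action must be tracked carefully so that no stray Koszul sign obstructs naturality. Once the super tensor product's associativity with signs is taken as known (from \cite{DM}), each of these checks is routine, so I expect the proof to be short, with the only real care needed in confirming that $\alpha$ and $\beta$ are bimodule maps of the correct variance — but that is precisely what Propositions \ref{prop:moritacontext1} and \ref{prop:moritacontext2} already deliver.
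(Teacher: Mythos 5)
Your proof of part (1) matches the paper's: invoke associativity of the super tensor product, apply $\mathrm{id}\otimes\alpha$ and $\mathrm{id}\otimes\beta$ (these are isomorphisms by Props.~\ref{prop:moritacontext1} and \ref{prop:moritacontext2} since $P_R$ is a progenerator), and finish with the canonical unitor isomorphisms $M\otimes_R R\cong M$, $N\otimes_S S\cong N$. Your primary argument for part (2) — the parallel direct computation with the roles of $\alpha$ and $\beta$ interchanged — is precisely what the paper means by ``completely analogous.''

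Your alternative route for (2) via $\cdot^o$, however, contains several bookkeeping errors. Since $P$ is a \emph{right} $R$-module, $P^o$ is a \emph{left} $R^o$-module, so ``(1) applied to $P^o_{R^o}$'' does not parse; the right $R^o$-module you would want to feed into (1) is $Q^o$, not $P^o$. The asserted identification $\underline{End}_{R^o}(P^o)=S^o$ contradicts the paper's Section 2 statement $\underline{End}_R(P)=\underline{End}_{R^o}(P^o)$, which gives $S$, not $S^o$. The functor identifications are also swapped: the natural isomorphism recorded in Section 2 gives $(P\otimes_R M)^o\cong M^o\otimes_{R^o}P^o$, so $P\otimes_R-$ corresponds to $-\otimes_{R^o}P^o$ (not $-\otimes_{R^o}Q^o$), and likewise $Q\otimes_S-$ corresponds to $-\otimes_{S^o}Q^o$. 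Finally, to run the corrected version of this reduction one needs $Q^o_{R^o}$ (equivalently ${}_R Q$) to be a progenerator and its associated Morita context to have dual $\cong P^o$ and endomorphism ring $\cong S^o$; these follow from Prop.~\ref{prop:moritacontext2}(2c),(2d) together with Prop.~\ref{prop:leftright}, but Prop.~\ref{prop:leftright} is proved \emph{after} this theorem, so invoking it would require verifying the reordering is non-circular. The direct computation you give first avoids all of this and is what the paper does.
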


\begin{proof}

Let $M$ be a right $R$-module. Then

\begin{align*}
(M \otimes_R Q) \otimes_S P &\cong M \otimes_R (Q \otimes_S P)\\
&\cong M \otimes_R R\\
&\cong M\\
\end{align*}

\noindent where the first isomorphism is the canonical associativity
isomorphism of the super tensor product induced by $(m \otimes q)
\otimes p \mapsto m \otimes (q \otimes p)$, the second isomorphism
follows from Prop. \ref{prop:moritacontext1}, and the third is the
canonical isomorphism induced by $m \otimes r \mapsto mr$. All of
these isomorphisms are clearly functorial in $M$, whence it follows
that the composition of $- \otimes_S P$ with $ - \otimes_R Q$ is
naturally equivalent to the identity.

If $N$ is a right $S$-module, the same proof goes through, switching
the roles of $R$ and $S$, as well as those of $P$ and $Q$, and
instead of Prop. \ref{prop:moritacontext1}, we invoke Prop.
\ref{prop:moritacontext2} to conclude that $N \otimes_S (P \otimes_R
Q) \cong N \otimes_S S$.

Again, all isomorphisms are functorial in $M$, whence it follows
that the composition of $- \otimes_R Q$ with $ - \otimes_S P$ is
naturally equivalent to the identity. Hence we have proven that $P
\otimes_R -$ and $Q \otimes_S -$ are mutually inverse category
equivalences.

The proof of part 2) is completely analogous and is left to the
reader.\end{proof}

We now make a useful definition.

\begin{defn}
Let $A, B$ be super rings. An $(A, B)$-bimodule $C$ is {\it
faithfully balanced} if the natural maps $A \to
\underline{End}(C_B)$ and $B \to \underline{End}({}_A C)^o$ are both
isomorphisms of super rings.
\end{defn}

By Props. \ref{prop:moritacontext1} and \ref{prop:moritacontext2},
if $P_R$ is a progenerator, ${}_S P_R$ and ${}_R Q_S$ are faithfully
balanced bimodules.

The following proposition shows that the roles of $P$ and $Q$ in the
Morita theory are completely symmetric.

\begin{prop}\label{prop:leftright}
Suppose $P_R$ is a progenerator. Then ${}_S P, {}_R Q, Q_S$ are also
progenerators, and $\alpha, \beta$ are isomorphisms.
\end{prop}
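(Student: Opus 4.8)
The plan is to treat the two assertions separately, leaning on what has already been proved rather than redoing any Morita-context computation. That $\alpha$ and $\beta$ are isomorphisms is immediate: a progenerator is in particular a generator, so Prop.~\ref{prop:moritacontext1}(2a) gives that $\alpha\colon Q\otimes_S P\to R$ is an $(R,R)$-isomorphism; and a progenerator is in particular finitely generated projective, so Prop.~\ref{prop:moritacontext2}(2a) gives that $\beta\colon P\otimes_R Q\to S$ is an $(S,S)$-isomorphism. I would simply record these two citations.

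For the progenerator claims I would argue purely categorically. First observe that the rank $1|0$ free modules $R_R$, ${}_RR$, $S_S$, ${}_SS$ are all progenerators: each is free, hence projective, and obviously finitely generated; and each is a generator by Prop.~\ref{prop:generator}, since its trace ideal contains the image of the identity endomorphism and therefore equals the whole ring. Next record the canonical bimodule isomorphisms induced by the module actions, namely $R\otimes_R Q\cong Q$ in $\mathfrak{M}_S$, $P\otimes_R R\cong P$ in ${}_S\mathfrak{M}$, and $Q\otimes_S S\cong Q$ in ${}_R\mathfrak{M}$.

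Now apply Theorem~\ref{thm:moritaI} together with Prop.~\ref{prop:progenerator categorical}. The equivalence $-\otimes_R Q\colon\mathfrak{M}_R\to\mathfrak{M}_S$ of Theorem~\ref{thm:moritaI}(1) carries the progenerator $R_R$ to $Q_S$, so $Q_S$ is a progenerator; and the equivalences $P\otimes_R-\colon{}_R\mathfrak{M}\to{}_S\mathfrak{M}$ and $Q\otimes_S-\colon{}_S\mathfrak{M}\to{}_R\mathfrak{M}$ of Theorem~\ref{thm:moritaI}(2) carry ${}_RR$ to ${}_SP$ and ${}_SS$ to ${}_RQ$ respectively, so ${}_SP$ and ${}_RQ$ are progenerators as well (using the left-module version of Prop.~\ref{prop:progenerator categorical}, valid by the remark following it).

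The only points requiring any care are the verification that a rank $1|0$ free module is a generator, which is a one-line application of the trace-ideal criterion in Prop.~\ref{prop:generator}, and the bookkeeping over which half of Theorem~\ref{thm:moritaI} governs the left- versus right-hand module structures on $P$ and $Q$. I do not anticipate a genuine obstacle: the proposition is essentially the symmetry already latent in the super Morita theorems, made explicit by pushing the trivial progenerators $R$ and $S$ through the equivalences.
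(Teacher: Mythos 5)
Your argument is correct, and for the progenerator claims it is genuinely different from, and cleaner than, the paper's proof. For $\alpha$ and $\beta$ being isomorphisms you cite Prop.~\ref{prop:moritacontext1}(2a) and Prop.~\ref{prop:moritacontext2}(2a), which is exactly the paper's reasoning (the paper just records their surjectivity). The divergence is in the progenerator claims: the paper proves $Q_S$ directly from the Morita-context propositions (using $\underline{Hom}_S(Q_S,S)\cong P$, $\underline{End}(Q_S)\cong R$, and surjectivity of $\alpha,\beta$ to apply Props.~\ref{prop:moritacontext1}(1) and \ref{prop:moritacontext2}(1) to $Q_S$), and then for ${}_SP$ runs a rather involved argument through $S'=\underline{End}_S({}_SP)$, the isomorphism $\sigma:R\cong(S')^o$, and the opposite functor $\cdot^o$ to reduce to a right-module Morita context over $S^o$. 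You instead observe that the free rank-$1|0$ modules $R_R$, ${}_RR$, ${}_SS$ are progenerators (projective, finitely generated, and generators since their trace ideals contain the image of $\mathrm{id}$), and then push them through the already-proved equivalences of Theorem~\ref{thm:moritaI}, applying Prop.~\ref{prop:progenerator categorical} and its left-module analogue. The unit isomorphisms $R\otimes_RQ\cong Q$, $P\otimes_RR\cong P$, $Q\otimes_SS\cong Q$ identify the images, and the needed prerequisites (Thm.~\ref{thm:moritaI}, Prop.~\ref{prop:progenerator categorical}) are all established before Prop.~\ref{prop:leftright}, so there is no circularity. What your approach buys is brevity and modularity, treating all three of ${}_SP$, ${}_RQ$, $Q_S$ uniformly; what the paper's approach buys is that it is more explicit and, in the case of $Q_S$, identifies the relevant Morita context $(S,Q,P,R;\cdot,\cdot)$ concretely, information you get only indirectly. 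Both are valid; yours is the tidier proof.
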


\begin{proof}
First, we prove that $Q_S$ is a progenerator. By Prop.
\ref{prop:moritacontext1}(2), we have $\underline{Hom}_S(Q_S, S_S)
\cong P$ and $\underline{End}(Q_S) \cong R$. As $\alpha, \beta$ are
surjective, we see from Prop. \ref{prop:moritacontext1}(1) and
\ref{prop:moritacontext2}(1), applied to $Q_S$, that $Q_S$ is a
progenerator.

The proof for ${}_R Q$ is analogous to that for ${}_S P$ and so we only do the case of ${}_S P$. We denote $\underline{End}_S({}_S P)$ by $S'$. $P$ is an $(S, S'^o)$-bimodule. By 2d of Prop. \ref{prop:moritacontext1} we have a super ring isomorphism $\sigma: R \cong (S')^o$. Recalling that $\sigma$ is defined by the right action of $R$, the natural equivalence of categories $\sigma^{-1}: \mathfrak{M}_{(S')^o} \to \mathfrak{M}_R$ induced by the isomorphism $\sigma^{-1}$ clearly sends $P_{(S')^o}$ to $P_R$. By hypothesis $P_R$ is an $R$-progenerator, hence $P_{(S')^o}$ is an $(S')^o$-progenerator. The $S$-dual $P^\vee := \underline{Hom}_S({}_S P, {}_S S)$ is then an $((S')^o, S)$-bimodule in the usual way, and again the equivalence of categories $\sigma^{-1}: {}_{(S')^o} \mathfrak{M} \to {}_R \mathfrak{M}$ sends ${}_{(S')^o} P^\vee$ to ${}_R P^\vee$.

We want to show the pairings $\alpha': P \otimes_{(S')^o} P^\vee \to S$ and $\beta': P^\vee \otimes_S P \to (S')^o$ are epimorphisms. Identifying $\mathfrak{M}_{(S')^o}$ with $\mathfrak{M}_R$ and ${}_{(S')^o} \mathfrak{M}$ with ${}_R \mathfrak{M}$, and identifying the $(R,S)$-modules $Q$ and $P^\vee$ via 2c of Prop. \ref{prop:moritacontext1} we see that this is equivalent to $\beta: P \otimes_R Q \to S$ and $\alpha: Q \otimes_S P \to R$ being epimorphisms. Since $P_R$ is a progenerator, that is indeed the case.

Now we apply the functor $\cdot^o$ to convert everything to right modules. ${}_S P_{(S')^o}$ becomes ${}_{S'} P_{S^o}$, and since $P^\dag := \underline{Hom}_{S^o}(P_{S^o}, S^o_{S^o}) = P^\vee$, ${}_{(S')^o} P^\vee_S$ becomes ${}_S P^\dag_{(S')^o}$. The $(S, S)$ (resp. $((S')^o, (S')^o)$) epimorphisms $\alpha': P \otimes_{(S')^o} P^\vee \to S$ and $\beta': P^\vee \otimes_S P \to (S')^o$ become $(S^o, S^o)$ (resp. $(S', S')$) epimorphisms $\alpha': P \otimes_{(S')^o} P^\vee \to S$ and $\beta': P^\vee \otimes_S P \to (S')^o$. By \ref{prop:moritacontext1} and \ref{prop:moritacontext2}, $P_{S^o}$ is a progenerator. Since being a progenerator is a categorical property (Prop. \ref{prop:progenerator categorical}) we finally conclude that ${}_S P$ is a progenerator as well.
\end{proof}

Hence if one of $P_R, {}_R Q, {}_S P, Q_S$ are $R$-progenerators
(resp. $S$-progenerators), the rest of them are too.

Now we prove the second of the main Morita theorems (``super Morita II"), a converse to
the first. It states that {\it every} Morita equivalence between two
categories of super modules is (up to natural equivalence) of the form given
in the first Morita theorem: i.e., by tensoring with a
progenerator.

\begin{thm}
Let $R$, $S$ be two super rings, and

\begin{align*}
&F: \mathfrak{M}_R \to \mathfrak{M}_S\\
&G: \mathfrak{M}_S \to \mathfrak{M}_R\
\end{align*}

\noindent be mutually inverse category equivalences. Let $Q = F(R_R)$, $P =
G(S_S)$. Then there are natural bimodule structures $P = {}_S P_R, Q
= {}_R Q_S$, which yield functor isomorphisms $F \cong - \otimes_R
Q$ and $G \cong - \otimes_S P$.
\end{thm}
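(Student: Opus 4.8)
The plan is to mirror the classical proof of Morita II (as in Ch.~18 of \cite{Lam}), adapting it to the super setting by tracking parity signs and using the parity-reversal functor $\Pi$ where the classical argument uses direct sums. First I would establish that $P = G(S_S)$ is an $(S,R)$-bimodule: the right $R$-action comes from the fact that $R = \underline{End}(R_R)$ acts on $R_R$ on the left, but to get a genuine right action we pass through $R^o \cong \underline{End}(R_R)^o$; applying $G$ and then converting back gives a right $R$-action on $P$. Actually the cleaner route, and the one I would take, is: $S$ acts on $S_S$ on the left via $S = \underline{End}(S_S)$ (after the standard $(\,\cdot\,)^o$ adjustment), so $P = G(S_S)$ carries a left $S$-action by functoriality of $G$; simultaneously $\underline{End}(S_S) \cong \underline{End}(G(S_S)) = \underline{End}(P)$ since $G$ is an equivalence, and one checks $\underline{End}_R(P)$ gives $P$ its right $R$-structure once we know $R \cong \underline{End}_S(P)^o$. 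Symmetrically $Q = F(R_R)$ becomes an $(R,S)$-bimodule.

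Next I would show $P$ is an $R$-progenerator. Since $R_R$ is obviously an $R$-progenerator (it is free of rank $1|0$, hence finitely generated projective, and it is a generator because $\underline{Hom}_R(R,-)$ is the forgetful functor, which is faithful) and $G$ is an equivalence of categories, Prop.~\ref{prop:progenerator categorical} immediately gives that $P = G(S_S)$ is an $S$-progenerator --- wait, one must be slightly careful: $G(S_S)$ is a priori an object of $\mathfrak{M}_R$, and the relevant statement is that $S_S$ is an $S$-progenerator, so $G(S_S)$ is an $R$-progenerator. Good. By Prop.~\ref{prop:leftright}, all of ${}_S P$, ${}_R Q$, $Q_S$ are then progenerators too, and $\alpha,\beta$ are isomorphisms, so the super Morita context attached to $P_R$ already produces an equivalence $-\otimes_R Q' \colon \mathfrak{M}_R \to \mathfrak{M}_{S'}$ with $S' = \underline{End}_R(P)$ by Theorem~\ref{thm:moritaI}.

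The heart of the argument is then to identify $S' = \underline{End}_R(P)$ with $S$ and to produce the natural isomorphism $F \cong -\otimes_R Q$. For the first: $\underline{End}_R(P) = \underline{End}_R(G(S_S)) \cong \underline{End}_S(S_S) \cong S$ as super rings, the middle isomorphism because $G$ is an equivalence (it induces ring isomorphisms on endomorphism objects, and one checks it respects the $\mathbb{Z}_2$-grading since $G$ preserves parity of morphisms) and the last being the standard identification $\underline{End}_S(S_S) \cong S$ (here a sign/opposite appears, which I would absorb into the bimodule conventions fixed in the first step). For the natural isomorphism: both $-\otimes_R Q$ and $F$ are cocontinuous functors $\mathfrak{M}_R \to \mathfrak{M}_S$ (being left adjoints / equivalences), and they agree on the generator $R_R$: indeed $R_R \otimes_R Q \cong Q = F(R_R)$ via the canonical isomorphism $m \otimes q \mapsto mq$, which is parity-preserving. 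Since every $R$-module is a cokernel of a map between (possibly infinite) direct sums of copies of $R_R$ and of $\Pi R_R$ --- and here the $\Pi$ copies are handled because $-\otimes_R Q$ and $F$ both commute with $\Pi$ up to natural isomorphism (for $-\otimes_R Q$ this is $\Pi(M\otimes_R Q) \cong (\Pi M)\otimes_R Q$ from Prop.~\ref{prop:parity change}; for $F$ it is because $\Pi$ is itself given by tensoring, or directly because $F$ is additive and $\Pi R \oplus R \cong$ a canonical object) --- a standard comparison-of-presentations argument extends the isomorphism on $R_R$ to a natural isomorphism $-\otimes_R Q \cong F$ on all of $\mathfrak{M}_R$. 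The same argument with $P$ in place of $Q$ gives $G \cong -\otimes_S P$.

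The main obstacle I anticipate is purely bookkeeping: getting the bimodule structures on $P$ and $Q$ to match up on the nose (not merely up to an opposite-ring twist) with the conventions used in Theorem~\ref{thm:moritaI}, so that the endomorphism-ring identification $\underline{End}_R(P) \cong S$ is an honest super ring isomorphism and the evaluation map $R\otimes_R Q \to Q$ is $S$-linear on the correct side with the correct Koszul sign. A secondary subtlety is verifying that the equivalence $F$ (resp.\ $G$) really does induce a \emph{graded} ring isomorphism on internal endomorphism super rings --- this follows because $F$ preserves $Hom$ (parity-preserving maps) and also the odd part of $\underline{Hom}$ can be recovered as $\underline{Hom}_R(M,N)_1 \cong Hom_R(M,\Pi N)$ via Prop.~\ref{prop:parity change}, which $F$ respects since it commutes with $\Pi$; but this point must be stated carefully rather than waved through.
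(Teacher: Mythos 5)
Your outline tracks the paper through the first two thirds: both set up the $(S,R)$- and $(R,S)$-bimodule structures on $P=G(S_S)$ and $Q=F(R_R)$ by transporting $\underline{\mathrm{End}}(S_S)\cong S$ across the equivalence, both invoke Prop.~\ref{prop:progenerator categorical} to see $P_R$ is a progenerator, and both then reduce to super Morita I. Where you diverge is the final step, and there your argument has a gap. The paper does not try to extend the isomorphism $R\otimes_R Q\cong F(R)$ by a presentation/cocontinuity argument. Instead it identifies $Q$ with $P^*$ as an $(R,S)$-bimodule, writes $F(M)\cong \underline{Hom}_S(S_S,F(M))\cong\underline{Hom}_R(P_R,M_R)$, and then applies Lem.~\ref{lem:rhomlemma} (the finitely-generated-projective $\underline{Hom}$--tensor isomorphism $\underline{Hom}_R(P,M)\cong M\otimes_R P^*$) to conclude $F\cong -\otimes_R Q$ in one stroke, with naturality built into the lemma. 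That is a genuinely different, and more economical, route.

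The concrete problem with your density argument is the claim that ``$F$ commutes with $\Pi$ up to natural isomorphism.'' You offer two justifications, and neither one works: (i) ``$\Pi$ is itself given by tensoring'' does not help, because commuting $F$ past $-\otimes_R(\Pi R)$ is precisely the content of the Morita~II statement you are trying to prove for the bimodule $\Pi R$; and (ii) ``$F$ is additive and $\Pi R\oplus R$ is a canonical object'' only gives $F(\Pi R)\oplus Q\cong F(R^{1|1})$, and without a Krull--Schmidt-type theorem (which fails in this generality) you cannot cancel the summand $Q$ to deduce $F(\Pi R)\cong\Pi Q$. Since $\Pi$ is not a purely categorical construction on $\mathfrak{M}_R$ (whose morphisms are only the parity-preserving ones), a ``mere'' category equivalence need not intertwine $\Pi_R$ with $\Pi_S$; this has to be earned, not assumed. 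Your presentation argument really does need $F(\Pi R)\cong\Pi Q$ to handle the odd generators, so this is not bookkeeping --- it is the missing idea. The paper's route through Lem.~\ref{lem:rhomlemma} avoids the issue because, once the bimodule structures are fixed, $\underline{Hom}_R(P,-)\cong -\otimes_R Q$ automatically commutes with $\Pi$, so one never needs to know in advance what $F$ does to $\Pi R$. (To your credit, you flag a closely related subtlety --- whether $F$ induces isomorphisms of the internal $\underline{Hom}$'s/$\underline{\mathrm{End}}$'s, not just categorical $Hom$'s --- as a point that ``must be stated carefully''; the paper in fact uses exactly such an identification, $\underline{Hom}_R(P,R)\cong\underline{Hom}_S(F(P),F(R))$, without further comment, so the two proofs lean on the same unstated compatibility in slightly different places. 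But in your version the issue is load-bearing and unaddressed, whereas adopting the paper's $\underline{Hom}$--tensor step would confine it to the bimodule-structure setup.)
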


\begin{proof}
Since $G(S) = P$, $\underline{End}(S_S) \cong \underline{End}(P_R)$ as super rings, via $f \mapsto G(f)$. It is easily seen that $\underline{End}(S_S) \cong S$ as super rings via the morphism $f \mapsto f(1)$ (the inverse being given by $s \mapsto g_s$, where $g_s(x) = sx$). Hence we have a natural $(S, R)$-bimodule structure ${}_S P_R$.
Via a similar argument for $Q$, we have a natural $(R, S)$-bimodule structure ${}_R Q_S$. As $S_S$ is a progenerator in $\mathfrak{M}_S$, $P_R$ is a
progenerator in $\mathfrak{M}_R$ by Prop. \ref{prop:progenerator
categorical}.

$\underline{Hom}_R(P, R)$ has its usual left $R$-module structure induced by left multiplication in $R$, and a right $S$-module structure defined by $QSP$-associativity. We verify that $\underline{Hom}_R(P, R) \cong Q$ as right $S$-modules:

\begin{align*}
\underline{Hom}_R(P, R) \cong \underline{Hom}_S(F(P), F(R)) \cong \underline{Hom}_S(S_S, Q_S) \cong Q
\end{align*}\\

Hence, the super Morita context associated to $P_R$ is $(R, P, Q, S;
\alpha, \beta)$, where $\alpha, \beta$ are the pairings defined
previously. Now the first Morita theorem applies; it remains to show
that $F$ is naturally equivalent to the functor $- \otimes_R Q$.

Given any $M \in \mathfrak{M}_R$,

\begin{align*}
F(M) \cong \underline{Hom}_S(S_S, F(M)) \cong \underline{Hom}_R(P_R, M_R)
\end{align*}\\

\noindent whence $F \cong \underline{Hom}_R(P_R, -) \cong -
\otimes_R Q$ by Lem. \ref{lem:rhomlemma}. Similarly, we have $G \cong
\underline{Hom}_S(Q_S, -) \cong - \otimes_S P$.

\end{proof}

We have one more theorem (``super Morita III") which characterizes the
isomorphism classes of equivalences between super module categories.
In order to state this theorem, we require the following:

\begin{defn}
Let $R, S$ be super rings. An $(S, R)$-bimodule ${}_S P_R$ is an {\it $(S,
R)$-progenerator} if ${}_S P_R$ is faithfully balanced and $P_R$ is
an $R$-progenerator.
\end{defn}

We may now state the third of our Morita theorems:

\begin{thm}
Let $R$ and $S$ be two super rings. Then the isomorphism classes of
category equivalences $\mathfrak{M}_S \to \mathfrak{M}_R$ are in
one-to-one correspondence with the isomorphism classes of $(S,
R)$-progenerators. Composition of category equivalences corresponds
to tensor products of these progenerators.
\end{thm}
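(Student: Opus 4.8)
The plan is to establish the correspondence by combining the previous two Morita theorems with a bookkeeping argument about how the constructions interact. First I would define the two maps of the claimed bijection. Given an $(S,R)$-progenerator ${}_S P_R$, the functor $- \otimes_S P : \mathfrak{M}_S \to \mathfrak{M}_R$ is a category equivalence: indeed, by hypothesis $P_R$ is an $R$-progenerator, and since ${}_S P_R$ is faithfully balanced we have $S \cong \underline{End}(P_R)$ as super rings, so the super Morita context associated to $P_R$ is (up to this identification) $(R, P, Q, S; \alpha, \beta)$ and Theorem \ref{thm:moritaI} applies. Conversely, given a category equivalence $F: \mathfrak{M}_S \to \mathfrak{M}_R$, set $P := F(S_S)$; by the proof of super Morita II, $P$ carries a natural $(S,R)$-bimodule structure, $P_R$ is an $R$-progenerator, and the induced map $S \to \underline{End}(P_R)$ is an isomorphism, while $R \to \underline{End}({}_S P)^o$ is an isomorphism by Prop. \ref{prop:moritacontext1}(2d) — so ${}_S P_R$ is an $(S,R)$-progenerator. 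Thus both maps are well-defined on isomorphism classes.

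Next I would check these maps are mutually inverse. Starting from an equivalence $F$, super Morita II gives precisely $F \cong - \otimes_S P$ with $P = F(S_S)$, so one round trip is the identity on isomorphism classes of equivalences. Starting from an $(S,R)$-progenerator ${}_S P_R$ and forming $F := - \otimes_S P$, we must identify $F(S_S) = S \otimes_S P$ with ${}_S P_R$ as $(S,R)$-bimodules; the canonical isomorphism $S \otimes_S P \cong P$, $s \otimes p \mapsto sp$, is a bimodule isomorphism, and one checks its source and target bimodule structures match those produced by the super Morita II recipe — this is where the careful sign conventions of Section 2 must be invoked, but it is routine. Hence the other round trip is also the identity, giving the bijection.

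For the compatibility with composition, suppose ${}_S P_R$ and ${}_T N_S$ are progenerators, giving equivalences $- \otimes_S P : \mathfrak{M}_S \to \mathfrak{M}_R$ and $- \otimes_T N : \mathfrak{M}_T \to \mathfrak{M}_S$. Their composite is $M \mapsto (M \otimes_T N) \otimes_S P \cong M \otimes_T (N \otimes_S P)$ by the associativity isomorphism of the super tensor product, which is functorial in $M$; so the composite equivalence corresponds to the $(T,R)$-bimodule $N \otimes_S P$. It remains to verify that $N \otimes_S P$ is itself a $(T,R)$-progenerator — $P_R$ being an $R$-progenerator is inherited because $- \otimes_S P$ is an equivalence and $N_S$ is an $S$-progenerator (Prop. \ref{prop:progenerator categorical}), and the faithfully balanced condition follows from the two given faithfully balanced conditions together with the fact that $\underline{End}$ of the composite module computes correctly through the equivalences.

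The main obstacle is the bimodule-structure bookkeeping in the round-trip $\;{}_S P_R \mapsto - \otimes_S P \mapsto (- \otimes_S P)(S_S)\;$: one must show that the $(S,R)$-bimodule structure that super Morita II manufactures on $S \otimes_S P$ (via $S \cong \underline{End}(S_S) \cong \underline{End}(P_R)$ for the left action and via the ambient right $R$-action) agrees, under $s \otimes p \mapsto sp$, with the original structure on ${}_S P_R$ — and similarly that the identification $\underline{Hom}_R(P,R) \cong Q$ used there is the one compatible with the given $Q = \underline{Hom}_R(P,R)$. These are straightforward but require tracking the Koszul signs introduced in the definitions of the opposite ring, the dual module, and the action of $\underline{End}$ on the right; none of it is deep, but it is the step most likely to hide an error. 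Everything else reduces cleanly to Theorems \ref{thm:moritaI}, the statement of super Morita II, Prop. \ref{prop:progenerator categorical}, and the associativity and unit isomorphisms of $\otimes$.
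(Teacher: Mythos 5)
Your proposal is correct and follows essentially the same route as the paper: the forward map is $-\otimes_S P$ (justified by Morita~I together with the faithfully-balanced identification $S\cong\underline{End}(P_R)$), the backward map sends $G$ to $G(S_S)$ (justified by the proof of Morita~II), and compatibility with composition comes from the associativity isomorphism of the super tensor product. The paper's own proof is considerably terser — it asserts the correspondence and leaves the round-trip verifications implicit — whereas you spell out both round trips and flag the sign-bookkeeping in identifying $S\otimes_S P$ with ${}_S P_R$; you also note that once the bijection is established, the fact that $N\otimes_S P$ is a $(T,R)$-progenerator can be read off by applying the backward map to the composite equivalence, which would streamline your last paragraph.
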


\begin{proof}
Each $(S, R)$-progenerator yields a category equivalence $- \otimes_S P: \mathfrak{M}_S \to \mathfrak{M}_R$. The isomorphism class of this equivalence depends only on the isomorphism class of the $(S, R)$-bimodule $P$. Conversely, suppose $G: \mathfrak{M}_S \to \mathfrak{M}_R$ is a category equivalence. Then $P := G(S_S)$ is an $(S,R)$-progenerator, as in the proof of the second Morita theorem. The isomorphism class of the $(S, R)$-bimodule $P$ clearly depends only on the isomorphism class of the equivalence $G$, proving the first statement. If ${}_R P'_T$ is an $(R, T)$-progenerator, the composition of equivalences $\mathfrak{M}_S \to \mathfrak{M}_R \to \mathfrak{M}_T$ is given by $- \otimes_S (P \otimes_R P')$, whence the second conclusion.\\
\end{proof}

\noindent {\bf Remark:} One can state versions of Morita II and Morita III for categories of left modules, and prove them in exactly the same way as we have done for categories of right modules, or one can use $\cdot^o$ to reduce to the right-module versions of the theorems. This is left to the reader.\\

\section{Application: super Azumaya algebras}
\subsection{Super Azumaya algebras.}

Let $k$ be a field. Recall that a ungraded $k$-algebra $B$ is said
to be {\it central} if its center is $k$, and {\it simple} if $B$
has no non-trivial two-sided ideals. Those algebras which are
finite-dimensional and central simple over $k$ are characterized by
the Artin-Wedderburn theorem:

\begin{thm}
Let $A$ be a algebra over a field $k$ which is finite dimensional as
a $k$-vector space. Then $A$ is central simple over $k$ if and only
if the map:

\begin{align*}
&A \otimes A^o \to End_k(A)\\
&a \otimes b \mapsto (x \mapsto axb)
\end{align*}

\noindent is an isomorphism of $k$-algebras.
\end{thm}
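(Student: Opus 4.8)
The plan is to prove the two implications separately, using the finite-dimensionality of $A$ over $k$ in an essential way to convert surjectivity/injectivity statements for the map $\phi: A \otimes A^o \to \mathrm{End}_k(A)$ into each other via dimension count. First I would observe that $A \otimes A^o$ acts on $A$ by $(a \otimes b) \cdot x = axb$, so that $A$ becomes an $(A \otimes A^o)$-module, and $\phi$ is precisely the structure map classifying this module structure. Note $\dim_k(A \otimes A^o) = (\dim_k A)^2 = \dim_k \mathrm{End}_k(A)$, so $\phi$ is an isomorphism iff it is injective iff it is surjective.

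For the forward direction, suppose $A$ is central simple. The key point is that $A \otimes A^o$ is itself simple: its center is $Z(A) \otimes Z(A^o) = k \otimes k = k$ (using that $A$, $A^o$ are finite-dimensional central over $k$, so the center of a tensor product of algebras is the tensor product of centers), and more importantly a finite-dimensional tensor product of central simple algebras over a field is again central simple — I would either cite this standard fact or sketch it via the argument that any two-sided ideal $J \subseteq A \otimes A^o$, upon choosing a $k$-basis of $A^o$ and expressing elements minimally, forces $J$ to contain $A \otimes 1$ and hence all of $A \otimes A^o$. Since $\phi$ is a nonzero algebra homomorphism (it sends $1 \otimes 1$ to $\mathrm{id}_A \neq 0$) and its source is simple, $\ker \phi = 0$, so $\phi$ is injective, hence bijective by the dimension count.

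For the converse, suppose $\phi: A \otimes A^o \to \mathrm{End}_k(A)$ is an isomorphism. Then $A \otimes A^o \cong \mathrm{End}_k(A) \cong \mathbb{M}_n(k)$ with $n = \dim_k A$, which is central simple over $k$. I would then deduce that $A$ itself is central simple: if $Z(A)$ were strictly larger than $k$, then $Z(A) \otimes Z(A^o)$ would be a commutative subalgebra of the center of $A \otimes A^o$ of dimension $> 1$, contradicting centrality of $A \otimes A^o$; and if $A$ had a nontrivial two-sided ideal $I$, then $I \otimes A^o$ would be a nontrivial two-sided ideal of $A \otimes A^o$, contradicting simplicity. (A cleaner packaging: $A$ is a direct summand — as a one-sided module over itself — and the bimodule $A$ being a progenerator over its commutant is exactly what faithful balancedness encodes, so one could also phrase this using the Morita machinery, but for the ungraded statement the elementary ideal-theoretic argument suffices.)

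The main obstacle is the forward direction, specifically establishing that $A \otimes_k A^o$ is simple (and central) over $k$ — this is the one genuinely nontrivial input and the place where finite-dimensionality and the ground field being a field (rather than a general commutative ring) are both used. Everything else is either a dimension count or a routine transport of ideals across the tensor product. I would likely present the simplicity of $A \otimes A^o$ as a lemma, proved by the minimal-length-expression argument on two-sided ideals, since it is exactly the ungraded prototype for the super statements developed in the sequel.
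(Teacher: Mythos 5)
The paper does not prove this theorem; it states it as a known classical fact (the ungraded Artin--Wedderburn characterization) solely to motivate the super version and, in turn, the definition of super Azumaya algebra. So there is no ``paper's own proof'' to compare against. Evaluated on its own terms, your argument is correct and is the standard one. The forward direction (centrality plus simplicity of $A$ imply $A\otimes A^o$ is central simple, so the nonzero algebra map $\phi$ is injective, then bijective by the dimension count $\dim_k(A\otimes A^o)=(\dim_k A)^2=\dim_k\operatorname{End}_k(A)$) is exactly right, and you correctly identify the one nontrivial input as the simplicity of $A\otimes_k A^o$. One small inaccuracy in the sketch of the minimal-length ideal argument: writing a nonzero $x\in J$ as $\sum a_i\otimes b_i$ with the $b_i$ independent and the number of terms minimal, simplicity of $A$ lets you normalize $a_1=1$, and then commutators $(a\otimes 1)x-x(a\otimes 1)$ together with minimality force the remaining $a_i$ into $Z(A)=k$; this yields $1\otimes b\in J$ for some $b\neq 0$, and then simplicity of $A^o$ gives $1\otimes A^o\subseteq J$, hence $J=A\otimes A^o$. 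So the ideal ends up containing $1\otimes A^o$ rather than $A\otimes 1$ (or the other way around if you run the argument in the second factor); this is immaterial but worth stating correctly. The converse direction, deducing centrality of $A$ from centrality of $A\otimes A^o\cong\mathbb{M}_n(k)$ via $Z(A)\otimes Z(A^o)\subseteq Z(A\otimes A^o)$, and simplicity of $A$ by transporting a proper nonzero ideal $I$ to $I\otimes A^o$, is also correct and is the cleanest route. Your parenthetical remark about rephrasing the converse via Morita theory is apt in spirit given the rest of the paper, but as you say, the elementary ideal argument is shorter here.
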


This theorem generalizes to the context of superalgebras. For this
we must recall some basic definitions.

The supercenter of a super ring $A$ is the sub-super ring:

\begin{align*}
Z(A) := \{x \in A : ax = (-1)^{|a||x|} xa \text{ for all homogeneous $a \in A$}
\end{align*}\

From now on, we suppose that $R$ is a supercommutative ring, i.e.
$rr' = (-1)^{|r||r'|} r'r$ for all homogeneous $r, r' \in R$. An
$R$-superalgebra is a super ring $A$ with a super ring morphism $i: R \to A$
such that $i(R) \subseteq Z(A)$.

The opposite $A^o$ is also an $R$-superalgebra in a natural way; since $i(R) \subseteq Z(A)$, and $R$ is supercommutative, $i^o: R \to A^o$ is a super ring morphism (here $i^o$ denotes the map $R \to A^o$ that agrees with $i$ as a map of sets).

If $A$, $B$ are $R$-superalgebras, the tensor product $A \otimes_R B$ possesses a natural structure of
$R$-superalgebra, the multiplication being given by:

\begin{equation*}
(a \otimes b) \cdot (c \otimes d) := (-1)^{|b||c|} ac \otimes bd
\end{equation*}\

Given any $R$-superalgebra $A$, there is a natural $R$-superalgebra
morphism $\phi: A \otimes_R A^o \to \underline{End}_R(A)$ given by:

\begin{equation*}
a \otimes b \mapsto (x \mapsto (-1)^{|b||x|}axb)
\end{equation*}\

From now on, we will denote the superalgebra $A
\otimes_R A^o$ by $A^e$ for the sake of brevity.

\noindent We say that a $k$-superalgebra $A$ is {\it central} if its
supercenter equals $k$, and $A$ is {\it simple} if $A$ has no
non-trivial two-sided homogeneous ideals. The super Artin-Wedderburn
theorem (see, e.g. \cite{Var}) then states that:

\begin{thm}
Let $A$ be a superalgebra over a field $k$, $char(k) \neq 2$, which
is finite dimensional as a $k$-super vector space. Then $A$ is
central simple over $k$ if and only if $\phi: A^e \to
\underline{End}_k(A)$ is an isomorphism of $k$-superalgebras.
\end{thm}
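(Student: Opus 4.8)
The plan is to prove the two directions separately, following closely the structure of the ungraded Artin–Wedderburn argument but being careful about signs and about the distinction between homogeneous and arbitrary ideals. Throughout, I would use the fact that $A$ is finite-dimensional over $k$, so that $\dim_k A^e = (\dim_k A)^2 = \dim_k \underline{End}_k(A)$; this reduces both injectivity and surjectivity of $\phi$ to checking that $\phi$ is injective, i.e. that $\ker \phi = 0$. So the whole theorem becomes: $A$ is central simple over $k$ if and only if $\phi: A^e \to \underline{End}_k(A)$ is injective.

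For the forward direction, assume $A$ is central simple. First I would observe that $\ker \phi$ is a homogeneous two-sided ideal of $A^e = A \otimes_k A^o$, since $\phi$ is a morphism of $k$-superalgebras (and the kernel of a graded algebra map is automatically a homogeneous ideal). The key step is then to show $A^e$ is simple as a superalgebra, so that $\ker\phi$, being a proper homogeneous ideal (it is proper because $\phi(1 \otimes 1) = \mathrm{id}_A \neq 0$), must be zero. To see that $A \otimes_k A^o$ is simple: I would take a nonzero homogeneous two-sided ideal $J \subseteq A \otimes_k A^o$, pick a nonzero element of $J$ of shortest length $\sum_{i=1}^n a_i \otimes b_i$ with the $b_i$ linearly independent over $k$, and use the supercommutator trick — multiplying on left and right by elements $c \otimes 1$ and $1 \otimes c'$ and taking graded differences — to show $1 \otimes 1 \in J$. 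The supercenter hypothesis $Z(A) = k$ is exactly what is needed to force the length down to $1$; this is the super version of the standard "simplicity of $A \otimes A^o$" argument, and it is the main obstacle, since one must track the Koszul signs introduced by the multiplication rule $(a\otimes b)(c\otimes d) = (-1)^{|b||c|} ac \otimes bd$ and by the definition of $A^o$, and check that the argument still closes despite them.

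For the converse, assume $\phi: A^e \to \underline{End}_k(A)$ is an isomorphism. To see $A$ is simple: if $I \subseteq A$ is a nonzero homogeneous two-sided ideal, then $I$ is a nonzero $A^e$-submodule of $A$ under the action $x \mapsto (-1)^{|b||x|} a x b$; but $\phi$ being onto $\underline{End}_k(A)$ makes $A$ a simple $A^e$-module (any endomorphism, in particular a projection onto a complement, is realized), forcing $I = A$. Alternatively and more directly: since every $k$-linear endomorphism of $A$ is of the form $x \mapsto \sum (-1)^{|b_i||x|} a_i x b_i$, an ideal stable under all such maps must be stable under all $k$-linear maps, hence is $0$ or $A$. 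To see $A$ is central: take a homogeneous $z \in Z(A)$; then the map $x \mapsto zx$ lies in the image of $\phi$ and commutes (in the graded sense) with all left and right multiplications, so it corresponds under the isomorphism $\phi$ to an element of $Z(A^e)$ that projects to an element of the graded center of $\underline{End}_k(A)$, which by a Schur-type argument (using that $A$ is a faithful simple module, now established) consists only of scalars; hence $z \in k$. I would present the centrality argument via the observation that $Z(\underline{End}_k(A)) = k$ once we know $A$ is a simple faithful $A^e$-module, citing the super Schur lemma over the algebraically trivial ground field $k$. The only real subtlety here, beyond bookkeeping, is confirming that $\mathrm{char}(k) \neq 2$ is used precisely where the super Schur lemma or the super Artin–Wedderburn structure theory requires it; since we are allowed to cite \cite{Var}, I would lean on that reference for the underlying super-division-algebra facts rather than reproving them.
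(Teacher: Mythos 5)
The paper does not prove this statement at all: it is quoted as the ``super Artin--Wedderburn theorem'' and attributed to the reference \cite{Var}, so there is no paper proof to compare against. Your proposal is therefore supplying something the paper deliberately omits, and on the whole it follows the standard ungraded Artin--Wedderburn template in the right way: the dimension count reducing the forward direction to injectivity of $\phi$, the observation that $\ker\phi$ is a proper homogeneous two-sided ideal, the shortest-length argument that $A\otimes_k A^o$ is graded simple when $A$ is super central simple (where the Koszul signs genuinely have to be tracked, as you note), and for the converse the observation that surjectivity of $\phi$ forces $A$ to have no proper homogeneous two-sided ideals.

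One part of the converse is stated in a way that would not survive scrutiny as written: you justify that the graded center of $\underline{End}_k(A)$ is $k$ ``by a Schur-type argument (using that $A$ is a faithful simple module).'' Schur's lemma, applied to the simple $A^e$-module $A$, only tells you that $\underline{End}_{A^e}(A)$ is a super division algebra over $k$; without assuming $k$ algebraically closed this does not immediately collapse to $k$. What you actually want is the elementary fact that the supercenter of the matrix superalgebra $\underline{End}_k(A)\cong M_{p|q}(k)$ is $k\cdot\mathrm{id}$, which one verifies by hand (an even central element must commute with everything, hence is scalar by the ungraded computation; an odd supercentral element must commute with all block-diagonal matrices and this forces its off-diagonal blocks to vanish). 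Once the surjectivity of $\phi$ identifies $\underline{End}_{A^e}(A)$ with that supercenter, one gets $L_z\in k$ and hence $z=L_z(1)\in k$. So replace the appeal to Schur with this direct computation, and the converse direction is clean. With that fix, and with the acknowledged sign-bookkeeping in the shortest-length step carried out, the proposal would be a correct and self-contained proof of a fact the paper only cites.
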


In ordinary commutative algebra, the notion of central simple
algebra over a field $k$ has been generalized to the category of
algebras over a commutative ring by adopting the conclusion of the
Artin-Wedderburn theorem as a definition. The resulting objects are
called {\it Azumaya algebras} (one may see \cite{KnO} for more on
the basics of ungraded Azumaya algebras).

We define the corresponding super notion as follows.

\begin{defn}
Let $A/R$ be a superalgebra over a supercommutative ring $R$. We say
that $A$ is a {\it super Azumaya algebra} over $R$ iff $A$ is a
faithful, finitely generated projective $R$-module, and the natural
morphism $\phi: A^e \to \underline{End}_R(A)$ is an isomorphism of
$R$-superalgebras.
\end{defn}

\noindent {\bf Example.} Let $k$ be an algebraically closed field of characteristic $\neq 2$. We define the {super skew field} (\cite{DM}) to be:

\begin{align*}
\mathbb{D} := k[\theta], \theta \text{ odd, }, \theta^2 = -1
\end{align*}\\

In \cite{DM} it is shown that $\mathbb{D}$ is central simple over
$k$, hence a super Azumaya algebra over $k$, and that (the Brauer
equivalence class of) $\mathbb{D}$ generates the super Brauer group
of $k$. For the definition of the super Brauer group of a field and
basic results, see again \cite{DM}; the original source for this
material is \cite{Wal}.

By definition a super Azumaya algebra $A$ is a progenerator, hence,
by the results of the previous section, there is a Morita equivalence between ${}_R \mathfrak{M}$ and
${}_{A^e} \mathfrak{M})$. Our aim in this section is to make this Morita equivalence even more
explicit by expressing it in terms of the notion of {\it supercommutant}.

We note that the concepts of $(A, A)$-bimodule and left $A^e$-module
are completely equivalent: if $M$ is a left $A^e$ module, we define
an $(A, A)$-bimodule structure by:

\begin{align*}
a  \cdot m &:= (a \otimes 1) \cdot m\\
m \cdot a &:= (-1)^{|a||x|}(1 \otimes a) \cdot m
\end{align*}\

where $a \in A$, $m \in M$ are homogeneous.

Conversely, if $N$ is an $(A, A)$-bimodule, we define a left
$A^e$-module structure on $N$ via:

\begin{align*}
(a \otimes b) \cdot n := (-1)^{|b||n|} a \cdot n \cdot b
\end{align*}\






\noindent It's readily seen that these correspondences are compatible with $(A, A)$-morphisms and $A^e$-morphisms respectively. Hence, there is a natural equivalence of categories between the category of $(A,A)$-bimodules and that of left $A^e$-modules.

We begin with a super version of a standard fact from the theory of
modules (cf. Exercise 20 of \cite{Lam}, Ch. 2):

\begin{lem}\label{lem:homlemma}
Let $R$ be a super ring (not necessarily commutative), and $P, B \in \mathfrak{M}_R$. Define
the morphism of superabelian groups $\sigma_{P,B}: P^* \otimes_R B
\to \underline{Hom}_R(P, B)$ by:

\begin{align*}
 [\sigma_{P,B}(f \otimes b)](x) := (-1)^{|x||b|} f(x) \cdot b \\
\end{align*}

Then if $P$ is a finitely generated projective $R$-module,
$\sigma_{P,B}$ is an isomorphism. Furthermore, $\sigma_{P, -}$ is
functorial in $B$: given a $R$-morphism $k: B \to B'$, the diagram of superabelian groups:

\begin{equation*}
\xymatrix{P^* \otimes _R B \ar[r]^-{\sigma_{P, B}} \ar[d]^{id \otimes k} & \underline{Hom}_R(P, B) \ar[d]^{k_*}\\
P^* \otimes _R B' \ar[r]^-{\sigma_{P, B'}} & \underline{Hom}_R(P, B')}\\
\end{equation*}

\noindent is commutative.\\

\end{lem}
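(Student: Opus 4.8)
The plan is to reduce the claim to the case $P = R^{m|n}$, a free module of finite rank, by exploiting the fact that everything in sight is additive in $P$. First I would observe that $\sigma_{P,B}$ is natural in $P$ as well as in $B$: a morphism $P \to P'$ induces a map $(P')^* \to P^*$ and hence a commuting square relating $\sigma_{P,B}$ and $\sigma_{P',B}$, and moreover $\sigma_{(-),B}$ is additive, carrying finite direct sums of modules to direct sums of abelian groups (using that $(\bigoplus P_i)^* \cong \bigoplus P_i^*$ for finite sums, that tensor product distributes over finite direct sums, and that $\underline{Hom}_R(\bigoplus P_i, B) \cong \bigoplus \underline{Hom}_R(P_i, B)$). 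Both $\Pi P \mapsto \Pi$-twisted data should be handled via Prop.~\ref{prop:parity change}: parity reversal on $P$ simply relabels gradings on both sides compatibly, so the case of $\Pi R$ follows from the case of $R$.

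Next I would verify the claim directly for $P = R$, viewed as a right $R$-module of rank $1|0$. Here $R^* = \underline{Hom}_R(R_R, R_R)$ is canonically identified with $R$ (as an $(R,R)$-bimodule) via $f \mapsto f(1)$, the inverse sending $r$ to left-multiplication by $r$. Under this identification $R^* \otimes_R B \cong R \otimes_R B \cong B$, and $\underline{Hom}_R(R, B) \cong B$ via $g \mapsto g(1)$; one then checks that $\sigma_{R,B}$ is exactly the composite of these canonical isomorphisms, paying attention to the sign $(-1)^{|x||b|}$, which in this case evaluates at $x = 1$ to give no sign. Hence $\sigma_{R,B}$ is an isomorphism. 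Combining this with the additivity from the previous paragraph and with the $\Pi$-case, $\sigma_{R^{m|n}, B}$ is an isomorphism for every finite rank $m|n$.

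Finally I would pass from free modules to finitely generated projectives. By Proposition~\ref{lem:dualbasis} (or equivalently the direct-summand characterization following it), if $P$ is finitely generated projective there is a free module $F = R^{m|n}$ and morphisms $\pi: F \to P$, $\iota: P \to F$ with $\pi \circ \iota = \mathrm{id}_P$. Applying naturality of $\sigma_{(-),B}$ in the module variable to $\pi$ and $\iota$ exhibits $\sigma_{P,B}$ as a retract of $\sigma_{F,B}$ in the arrow category; since a retract of an isomorphism is an isomorphism, $\sigma_{P,B}$ is an isomorphism. The functoriality in $B$ is then immediate: the stated square commutes because each ingredient of $\sigma_{P,B}$ — the evaluation pairing $f(x)$ and right multiplication by $b$ — is built from operations that commute with post-composition by the $R$-morphism $k: B \to B'$, and the sign $(-1)^{|x||b|}$ is unaffected since $k$ preserves parity. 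The only place requiring genuine care is bookkeeping the Koszul signs when identifying $\sigma_{R,B}$ with the canonical isomorphism $B \cong B$ and when transporting through the $\Pi$ twist; the structural argument itself is routine.
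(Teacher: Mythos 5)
Your proposal follows essentially the same route as the paper: verify the claim for $P = R$ (and $\Pi R$), extend to finite-rank free modules by additivity over direct sums, and pass to finitely generated projectives via a split epimorphism from a free module and naturality of $\sigma_{(-),B}$ in the module variable. The only cosmetic difference is that you invoke the abstract "retract of an isomorphism is an isomorphism" principle, whereas the paper unwinds this into separate injectivity and surjectivity arguments using the two commuting squares for $\pi^*$ and $\iota^*$ — the same content. One word of caution on a step you wave at: the $\Pi R$ case is not merely a relabeling of gradings, because the sign $(-1)^{|x||b|}$ in the definition of $\sigma_{P,B}$ genuinely changes when the parity of $x$ is flipped; the paper's computation picks up an extra $(-1)^{|b|}$ relative to the $R$ case. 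You do flag that Koszul-sign bookkeeping is the place requiring care, so this is an acknowledged omission rather than an error, but it is the one spot where your sketch cannot be left as "follows by relabeling."
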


\begin{proof} First, one readily checks that $\sigma_{P,B}$ is a parity-preserving homomorphism of abelian groups, so that it is indeed an $SAb$-morphism. First we consider the case where $P$ is free of finite rank, beginning with the case where $P$ is free of rank $1|0$ or $0|1$. So suppose that $P = R$ as $R$-modules. Then one may check readily (as in the ungraded case)
that $\underline{Hom}_R(R, B) \cong B$ in $SAb$, via $\phi \mapsto \phi(1)$. By composing this isomorphism with $\sigma_{R^*, B}$, we have an $SAb$-morphism $\sigma'_{R^*, B}:
R^* \otimes_R B \to B$, with $\sigma'_{R^*, B}(f \otimes b) := f(1)b$.

We are now reduced to proving that $\sigma'_{R^*,B}$ is an
isomorphism; this follows after one checks that the inverse
homomorphism $\sigma'^{-1}_{R^*, B}$ is given by $b \mapsto 1^*
\otimes b$, where $1^*$ is the functional in $R^*$ dual to $1 \in
R$.

The case of $P = \Pi R$ is completely analogous, but one must keep
careful track of the parity reversals.

We have an $SAb$ isomorphism $\underline{Hom}(\Pi R, B)
\cong \Pi B$ via $\phi \mapsto \Pi[\phi \Pi(\Pi^2 1)] = \Pi[\phi(\Pi(1))]$. This is the composition of three isomorphisms: the odd isomorphism $\underline{Hom}_R(\Pi R, B) \to \underline{Hom}_R(\Pi^2 R, B) = \underline{Hom}_R(R, B)$ given by $\phi \mapsto \phi \Pi$ (see Lem \ref{prop:parity change}), the above (even) isomorphism $Hom_R(R, B) \cong B$, and finally the odd isomorphism $\Pi(id): B \to \Pi B$.

Composing this with $\sigma_{(\Pi
R)^*, B}$, we have a new morphism $\sigma'_{(\Pi R)^*, B}: (\Pi R)^*
\otimes B \to \Pi B$ with $\sigma'_{(\Pi R)^*, B}(f \otimes b) = (-1)^{|b|}\Pi[f(\Pi
1)b]$, and it is again enough to show that $\sigma'_{(\Pi R)^*, B}$
is an isomorphism. One checks that the inverse homomorphism
$\sigma'^{-1}_{(\Pi R)^*, B}$ is given by $\Pi b \mapsto (\Pi (1))^*
\otimes b$.

Now let $P$ be a finite-rank free module: $P = \bigoplus_j P_j$,
where there are only finitely many $j$, and each $P_j$ is either
isomorphic to $R$ or $\Pi R$. We use the standard fact that
$\underline{Hom}$ is compatible with direct sums:

\begin{align*}
\underline{Hom}(\bigoplus_j P_j, M) \cong \prod_j \underline{Hom}(P_j,
M) \cong \bigoplus_j \underline{Hom}(P_j, M)
\end{align*}

The first isomorphism is the universal property of the direct sum, hence is natural. The second isomorphism is the natural identification of a finite direct
product with a finite direct sum (here we use the hypothesis that $F$ has finite rank).

One may check that $\sigma_{-, B}$ is compatible with these identifications, in the
sense that:

\begin{align*}
\xymatrix@!C{P_j^* \otimes _R B \ar[r]^-{\sigma_{P, B}|_{_{P_j}}} & \underline{Hom}_R(P_j, B) \\
P^* \otimes _R B \ar[r]^-{\sigma_{P, B}} \ar[u]^{i_j^* \otimes id} & \underline{Hom}_R(P, B) \ar[u]^{i_j^*}}\\
\end{align*}

\noindent where $i_j$ is the inclusion of the $j$th summand into the
direct sum. Hence, taking the direct sum over all (finitely many)
indices $j$, we have:

\begin{align*}
\xymatrix@!C{\bigoplus_j (P_j^* \otimes _R B) \ar[r]^-{\bigoplus_j \sigma_{P, B}|_{_{P_j}}} & \bigoplus_j \underline{Hom}_R(P_j, B) \\
P^* \otimes _R B \ar[r]^-{\sigma_{P, B}} \ar[u]^{\bigoplus_j i_j^* \otimes id} & \underline{Hom}_R(P, B) \ar[u]^{\bigoplus_j i_j^*}}\\
\end{align*}

For any $j$, we have either $P_j \cong R$ or $P_j \cong \Pi R$. By
what we proved earlier, $\sigma_{P, B}|_{_{P_j}}$ is an isomorphism
for each $j$, thus their direct sum $\bigoplus_j \sigma_{P,
B}|_{_{P_j}}$ is also an isomorphism. The morphisms $\bigoplus_j (
i_j^* \otimes id)$ and $\bigoplus_j i^*_j$ are just the identity
maps, hence isomorphisms. By commutativity of the diagram, it
follows that $\sigma_{P, B}$ is an isomorphism as well.

In turn, we reduce the general case to the case of a finite-rank
free module as follows. Since $P$ is finitely generated projective,
there exists a free module $F$ of finite rank and a split
epimorphism $\pi: F \to P$ with splitting $i: P \to F$. As
$i \bullet \pi = id_P$, so $i^* \circ \pi^* = id_{P^*}$. Hence
$\pi^*$ is injective, $i^*$ surjective.

Let $\underline{\pi}^*$ denote the morphism from
$\underline{Hom}_R(P,B)$ to $\underline{Hom}_R(F,B)$ induced by
$\pi$, and $\underline{i}^*$ the morphism $\underline{Hom}_R(F,B)$
to $\underline{Hom}_R(P,B)$ induced by $i$. Then the same
considerations as before give $\underline{i}^* \circ
\underline{\pi}^* = id_{\underline{Hom}_R(P,B)}$. Consequently,
$\underline{\pi}^*$ is injective, $\underline{i}^*$ surjective.

We claim the following diagrams are commutative:

\begin{equation*}
\xymatrix{F^* \otimes _R B \ar[r]^-{\sigma_{F, B}} & \underline{Hom}_R(F, B)\\
P^* \otimes _R B \ar[u]_{\pi^* \otimes id_B} \ar[r]^-{\sigma_{P, B}} & \underline{Hom}_R(P, B) \ar[u]_{\underline{\pi}^*}}\\
\end{equation*}

\begin{equation*}
\xymatrix{F^* \otimes _R B \ar[r]^-{\sigma_{F, B}} \ar[d]^{i^* \otimes id_B} & \underline{Hom}_R(F, B) \ar[d]^{\underline{i}^*}\\
P^* \otimes _R B \ar[r]^-{\sigma_{P, B}} & \underline{Hom}_R(P, B)}\\
\end{equation*}

We will check commutativity of the first. Let $y \in F, f \otimes b
\in P^* \otimes B$. Then:

\begin{align*}
[\underline{\pi}^* \circ \sigma_{P,B}(f \otimes b)](y) &= [\sigma_{P,B}(f \otimes b)](\pi(y))\\
&= (-1)^{|\pi(y)||b|}f(\pi(y)) \cdot b\\
&= (-1)^{|y||b|} \pi^*(f)(y) \cdot b\\
&= [\sigma_{F,B}(\pi^*(f) \otimes b)](y)\\
&= [(\sigma_{F,B} \circ (\pi^* \otimes id))(f \otimes b)](y)\\
\end{align*}

\noindent which is the statement that the first diagram commutes.

Commutativity of the second diagram is analogous and is left to the reader.




The lemma follows easily from this, combined with the fact that
$\sigma_{F,B}$ is an isomorphism ($F$ is a free module). For injectivity
of $\sigma_{P,B}$: suppose $a \in P^* \otimes B$ and
$\sigma_{P,B}(a) = 0$. But commutativity of the first diagram gives
$(\pi^* \otimes id_B)(a) = \sigma^{-1}_{F,B} \circ \underline{\pi}^*
\circ \sigma_{P,B}(a)$ hence $(\pi^* \otimes id_B)(a) = 0$. Since
$\pi^* \otimes id_B$ is injective, $a = 0$.

For surjectivity of $\sigma_{P,B}$: suppose $c \in
\underline{Hom}_R(P, B)$. Then since $\underline{i}^*$ is onto, $c =
\underline{i}^*(c')$ for some $c' \in \underline{Hom}_R(F,B)$. Let
$a = i^* \circ \sigma^{-1}_{F,B}(c')$. Then by commutativity of the
second diagram, $\sigma_{P,B}(a) = c$.

It remains to prove functoriality in $B$. Let $k: B \to B'$ be a
morphism, and let $x \in P$.
\begin{align*}
\ [k_* \circ \sigma_{P,B}(f \otimes b))](x) &= k[\sigma_{P,B}(f \otimes b)(x)]\\
&=k((-1)^{|b||x|} f(x) \cdot b)\\
&=(-1)^{|k(b)||x|} f(x) \cdot k(b)\\
&=[\sigma_{P, B'}(f \otimes k(b))](x)\\
&=[(\sigma_{P, B'} \circ (id \otimes k))(f \otimes b)](x)
\end{align*}\\

\noindent which is the statement that the given diagram commutes.
\end{proof}

\noindent {\bf Remark.} We will need the following additional property of $\sigma_{P, B}$ for our applications. If $P$ is also a right $S$-module for some super ring $S$, $\underline{Hom}_R(P, B)$ has a natural structure of left $S$-module via the ``pullback" action:

\begin{align*}
(sf)(x) := (-1)^{|f(x)||s|} f(xs)
\end{align*}\\

$P^*$ also has a left $S$-module structure by the same formula, which induces a left $S$-module structure on $P^* \otimes_R B$. It is easily seen that with these left $S$-module structures, $\sigma_{P, B}$ is a morphism (and by the lemma, an isomorphism) in ${}_S \mathfrak{M}$.

Of course, we have an analogous result for right $R$-modules. Since we need this for the proof of ``super Morita II", we formulate it explicitly:

\begin{lem}\label{lem:rhomlemma}
Let $R$ be a super ring (not necessarily commutative), and $P, B \in {}_R \mathfrak{M}$. Define
the morphism of superabelian groups $\sigma_{B,P}: B \otimes_R P^*
\to \underline{Hom}_R(P, B)$ by:

\begin{align*}
 [\sigma_{B,P}(b \otimes f)](x) := b \cdot f(x) \\
\end{align*}

Then if $P$ is a finitely generated projective $R$-module,
$\sigma_{B,P}$ is an isomorphism. Furthermore, $\sigma_{-, P}$ is
functorial in $B$: given a $R$-morphism $k: B \to B'$, the diagram of superabelian groups:

\begin{equation*}
\xymatrix{P^* \otimes _R B \ar[r]^-{\sigma_{B, P}} \ar[d]^{id \otimes k} & \underline{Hom}_R(P, B) \ar[d]^{k_*}\\
P^* \otimes _R B' \ar[r]^-{\sigma_{B', P}} & \underline{Hom}_R(P, B')}\\
\end{equation*}

\noindent is commutative.\\
\end{lem}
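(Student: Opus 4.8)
The plan is to deduce this from Lemma~\ref{lem:homlemma} by transporting it through the equivalence $\cdot^o$, in keeping with the strategy the paper has adopted for all of its other left-module assertions, rather than rerunning the inductive argument over the rank of $P$.

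First I would set up the dictionary furnished by $\cdot^o$. Given $P, B \in {}_R\mathfrak{M}$, we have $P^o, B^o \in \mathfrak{M}_{R^o}$; since $\cdot^o$ is a category equivalence carrying $R$ to $R^o$ (hence $R^{m|n}$ to $(R^o)^{m|n}$) and preserving direct summands, $P^o$ is a finitely generated projective $R^o$-module whenever $P$ is a finitely generated projective $R$-module. Because $\cdot^o$ acts as the identity on the underlying map of a homomorphism and interchanges left-$R$-linearity with right-$R^o$-linearity while preserving parity, one checks --- exactly as the paper does for $\underline{End}$ --- that $(P^*)^o = (P^o)^*$ and that $\underline{Hom}_R(P, B) = \underline{Hom}_{R^o}(P^o, B^o)$ as superabelian groups. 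Combining this with the natural isomorphism $(M \otimes_R N)^o \cong N^o \otimes_{R^o} M^o$, $m \otimes n \mapsto (-1)^{|m||n|} n \otimes m$, recorded earlier in the paper, the source $B \otimes_R P^*$ and target $\underline{Hom}_R(P, B)$ of $\sigma_{B,P}$ are identified, as superabelian groups, with $(P^o)^* \otimes_{R^o} B^o$ and $\underline{Hom}_{R^o}(P^o, B^o)$ respectively.

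The substance of the proof is then the verification that, read through these identifications, $\sigma_{B,P}$ coincides with the morphism $\sigma_{P^o, B^o} : (P^o)^* \otimes_{R^o} B^o \to \underline{Hom}_{R^o}(P^o, B^o)$ of Lemma~\ref{lem:homlemma}. This amounts to comparing the two defining formulas and checking that the Koszul sign $(-1)^{|m||n|}$ from $(M \otimes_R N)^o \cong N^o \otimes_{R^o} M^o$, the sign built into the definition of $\sigma_{P^o, B^o}$, and the signs produced by the $\cdot^o$-conversion of the module actions all cancel. Granting this, the isomorphism claim is immediate from Lemma~\ref{lem:homlemma} applied to the super ring $R^o$; and functoriality in $B$ comes for free, because the naturality square for $\sigma_{-, P}$ over $R$ is, under the dictionary above, precisely the naturality square for $\sigma_{P^o, -}$ over $R^o$, which commutes by the second half of Lemma~\ref{lem:homlemma}.

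I expect the only real difficulty to be this sign bookkeeping in matching the two $\sigma$'s; there is nothing conceptual beyond Lemma~\ref{lem:homlemma} itself. If one prefers to bypass the opposite-ring machinery, the alternative is to reprove the statement from scratch by copying the proof of Lemma~\ref{lem:homlemma} with the roles of left and right interchanged: reduce first to $P$ free of rank $1|0$ or $0|1$, then to $P$ finite free using compatibility of $\underline{Hom}$ and $(-)^*$ with finite direct sums, then to a retract of a finite free module; the sole new point there is keeping the parity-reversal signs straight in the rank-$0|1$ base case, just as in the cited proof.
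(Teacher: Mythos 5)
Your proposal matches the paper's own treatment exactly: the paper dispatches this lemma in a single remark, saying one may ``either adapt the proof of Lem.~\ref{lem:homlemma} to the category of [left] modules, or use the functor $\cdot^o$ to reduce everything to the [right]-module case.'' You elect the $\cdot^o$ route and sketch the dictionary --- $(P^*)^o = (P^o)^*$, $\underline{Hom}_R(P,B) = \underline{Hom}_{R^o}(P^o, B^o)$, $(M\otimes_R N)^o \cong N^o \otimes_{R^o} M^o$ --- in more detail than the paper gives, and you correctly flag the sign bookkeeping between the two $\sigma$'s as the only thing left to check; you also note the direct left/right adaptation as the alternative. This is the same argument, with the routine verification left implicit, just as in the paper.
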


To prove this, one can, as usual, either adapt the proof of Lem. \ref{lem:homlemma} to the category of right modules, or use the functor $\cdot^o$ to reduce everything to the case of left modules.\\

\subsection{The supercommutant.}

\begin{defn} Let $M$ be an $(A,A)$-bimodule. The {\it supercommutant}
of $M$ is the $(R, R)$-bimodule $M^A$ generated by:

\begin{equation*}
\{m \in M : a m = (-1)^{|a||m|} m a \; \forall \text{ homogeneous } a \in A \}.
\end{equation*}\
\end{defn}

\noindent Equivalently, interpreting $M$ as a left $A^e$-module, we
see that $M^A$ may be defined in terms of the $A^e$-action, as the
$(R, R)$-bimodule generated by:

\begin{equation*}
\{m \in M : (a \otimes 1) m = (1 \otimes a) m \; \forall  \text{ homogeneous } a \in A \}
\end{equation*}\

$M^A$ so defined is indeed an $(R, R)$-bimodule: as $i(R) \subseteq Z(A)$, the action of $R$ on $M$ commutes with the action of $A$. Let $a \in A$, $r \in R$, and suppose $m \in M^A$. Then:

\begin{align*}
a (r m) &= (ar) m\\
&= (-1)^{|a||r|} (ra) m\\
&= (-1)^{|a||r|+|ra||m|} m (ra)\\
&= (-1)^{|a||r|+|ra||m|} (mr) a\\
&= (-1)^{|ra||m|} (rm) a
\end{align*}

\noindent which is the statement that $rm \in M^A$. That $mr \in
M^A$ may be checked in completely analogous fashion.

We have the following functoriality property: if $f: M \to N$ is an
$A^e$-morphism, then clearly $f(M^A) \subseteq N^A$ and $f' :=
f|_{M^A}: M^A \to N^A$ is an $(R, R)$-morphism. Clearly $(f \circ g)' =
f' \circ g'$ and $(id_M)' = id_{M^A}$.

It follows that the operation of taking the supercommutant may be regarded as
a functor $-^A: {}_{A^e}\mathfrak{M} \to {}_R\mathfrak{M}$, where $M
\mapsto M^A, f \mapsto f'$ for any $A^e$-module $M$ and
$A^e$-morphism $f: M \to N$.

We now show

\begin{thm}\label{thm:commutant}
$\underline{Hom}_{A^e}(A, M)$ is naturally isomorphic to $M^A$ as a left
$R$-module.
\end{thm}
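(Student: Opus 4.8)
The plan is to construct an explicit $R$-linear map $\Phi: \underline{Hom}_{A^e}(A, M) \to M^A$ by evaluation at the identity, $\Phi(f) := f(1_A)$, and to show it is a bijection by exhibiting an inverse. First I would check that $\Phi$ lands in $M^A$: since $f$ is an $A^e$-morphism and, for homogeneous $a \in A$, we have $(a \otimes 1)\cdot 1_A = a = 1\cdot a = (1 \otimes a)\cdot 1_A$ inside $A$ (viewing $A$ as a left $A^e$-module via the bimodule structure $(a\otimes b)\cdot x = (-1)^{|b||x|}axb$), applying $f$ gives $(a\otimes 1)\cdot f(1_A) = (1\otimes a)\cdot f(1_A)$, which is exactly the defining condition for $f(1_A) \in M^A$ in its $A^e$-formulation. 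I must be careful with the Koszul sign when unwinding $(a\otimes b)\cdot 1_A$, but since $1_A$ is even the signs are trivial here.

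Next I would check that $\Phi$ is $R$-linear (as a map of left $R$-modules, using the left $R$-action on $\underline{Hom}_{A^e}(A,M)$ coming from the left $R$-action on the \emph{source} $A$, i.e. the pullback action, or equivalently from $i(R)\subseteq Z(A)$). This is a routine verification; one should note that $R$ acts on $A$ through the even central subring $i(R)$, so again no awkward signs appear, and parity is visibly preserved, so $\Phi$ is a morphism in ${}_R\mathfrak{M}$. For the inverse, given $m \in M^A$, define $\Psi(m): A \to M$ by $\Psi(m)(a) := a\cdot m$ (the left $A$-action on $M$; or more symmetrically $(-1)^{|b||a|}$-twisted versions when one writes it as an $A^e$-action). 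I would verify that $\Psi(m)$ is indeed $A^e$-linear: this is precisely where the supercommutant condition $am = (-1)^{|a||m|}ma$ gets used, to move the right-hand factor past $m$. Explicitly, $\Psi(m)((a\otimes b)\cdot x) = \Psi(m)((-1)^{|b||x|}axb) = (-1)^{|b||x|}(axb)\cdot m$, and I need this to equal $(a\otimes b)\cdot(\Psi(m)(x)) = (-1)^{|b||\Psi(m)(x)|} a\cdot(x\cdot m)\cdot b$; the discrepancy is resolved by pushing $b$ from the left of $m$ to the right of $m$ using $bm = (-1)^{|b||m|}mb$, which is legitimate since $m \in M^A$. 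Parity-preservation of $\Psi(m)$ is clear since $1_A$ is even.

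Finally I would check $\Phi \circ \Psi = \mathrm{id}$ and $\Psi \circ \Phi = \mathrm{id}$: the first is immediate, $\Phi(\Psi(m)) = \Psi(m)(1_A) = 1_A\cdot m = m$; the second says $\Psi(f(1_A))(a) = a\cdot f(1_A) = (a\otimes 1)\cdot f(1_A) = f((a\otimes 1)\cdot 1_A) = f(a)$, using $A^e$-linearity of $f$ once more. Naturality in $M$ (which justifies the word ``natural'' in the statement) follows because for an $A^e$-morphism $g: M \to N$ one has $g(\Phi_M(f)) = g(f(1_A)) = (g\circ f)(1_A) = \Phi_N(g_*f)$, and $g$ restricted to $M^A$ is exactly the functor $-^A$ applied to $g$. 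I do not expect a serious obstacle here; the only place demanding care is bookkeeping the Koszul signs when translating between the $(A,A)$-bimodule description and the left $A^e$-module description of both $M^A$ and of $A$ itself, and confirming that the left $R$-module structures on the two sides are matched by $\Phi$ rather than differing by a sign or a parity shift. That sign-tracking is the main (though routine) content of the argument.
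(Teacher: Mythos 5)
Your proposal is correct and matches the paper's own proof in every essential respect: you construct the same two maps (evaluation at $1_A$, and $m \mapsto (a \mapsto a\cdot m)$), verify membership in $M^A$ using $A^e$-linearity, check $R$-linearity via the pullback action, show the maps are mutually inverse, and confirm naturality by the same diagram chase. The only cosmetic difference is that you verify $f(1_A)\in M^A$ by noting $(a\otimes 1)\cdot 1_A = (1\otimes a)\cdot 1_A$ directly in the $A^e$-formulation, while the paper computes $f(a)$ two ways in the $(A,A)$-bimodule language; these are equivalent.
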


\begin{proof} We define the isomorphism $F; \underline{Hom}_{A^e}(A, M) \to M^A$ as follows. Suppose $f:A \to M$ is an $A^e$-homomorphism. Then:

\begin{align*}
f(a) &= f((a \otimes 1) \cdot 1)\\
&=a \cdot f(1) \cdot 1\\
&=a \cdot f(1)\\
\\
f(a) &= f((1 \otimes a) \cdot 1)\\
&=(-1)^{|f(1)||a|} \cdot f(1) \cdot a\\
&= (-1)^{|f(1)||a|} f(1) \cdot a\\
\end{align*}

\noindent hence $f(1) \in M^A$. Thus we have a parity-preserving map:

\begin{align*}
F: &\underline{Hom}_{A^e}(A, M) \to M^A\\
&f \mapsto f(1)\\
\end{align*}

\noindent Recalling that the $R$-module structure on $\underline{Hom}_{A^e}(A, M)$ is defined by the ``pullback" action:

\begin{align*}
(rf)(x) := (-1)^{|r||f(x)|} f(xr)
\end{align*}\


\noindent it is readily checked that $F$ is an $R$-morphism. Conversely, suppose $m \in M^A$. Then we define a map $g_m: A \to M$
by:

\begin{align*}
g_m(x) = x \cdot m\\
\end{align*}

\noindent We check that $g_m$ so defined is indeed an $A^e$-homomorphism:

\begin{align*}
g_m(a \otimes b \cdot x) &= g((-1)^{|b||x|} axb)\\
&= (-1)^{|b||x|} axbm\\
&= (-1)^{|b|(|x| + |m|)} axmb   \, \text{ (since $m \in M^{A^e}$)}\\
&= (-1)^{|b||g_m(x)|} \, a \cdot g_m(x) \cdot b\\
&=(a \otimes b) \cdot g_m(x)\\
\end{align*}

\noindent Hence $m \mapsto g_m$ is a parity-preserving map $G: M^A \to
\underline{Hom}_{A^e}(A, M)$, and it's easily seen that $G$ is
inverse to $F$. Since $F$ is an $R$-morphism, so is $G$.

We now verify the naturality statement of the theorem: that $F: \underline{Hom}_{A^e}(A, M) \to M^A$ and $G: M^A \to
\underline{Hom}_{A^e}(A, M)$ are functorial in $M$.

Let $h: M \to N$ be an $A^e$-morphism. Then $h$ induces the
$R$-morphism $h_*: \underline{Hom}_{A^e}(A, M) \to
\underline{Hom}_{A^e}(A, N)$ by $h_*(f) = h \circ f$.

The statement that $F$ is a natural transformation is the equality $F \circ h_* = h' \circ F$, where $h'$
denotes the restriction of $h$ to $M^A$.

\begin{align*}
F \circ h_*(f) &= F \circ (h \circ f)\\
&= (h \circ f)(1)\\
&=h(f(1))\\
&=h' \circ F(f)\\
\end{align*}

The proof that $G$ is a natural transformation is completely analogous: we verify the equality $G \circ h' = h_* \circ G$.

\begin{align*}
G \circ h'(m) &= G(h(m))\\
&=g_{h(m)}\\
&=h \circ g_m\\
&=h_* \circ G(m)\\
\end{align*}

\end{proof}

\subsection{The main result.} Putting all this together, we have our main result:

\begin{thm}
Let $R$ be a supercommutative ring, $A/R$ a super Azumaya algebra. Then the functors:

\begin{align*}
A \otimes_R - &:   {}_R \mathfrak{M} \to {}_{A^e} \mathfrak{M}\\
-^A &: {}_{A^e} \mathfrak{M} \to {}_R \mathfrak{M}\\
\end{align*}

\noindent are mutually inverse category equivalences.

\end{thm}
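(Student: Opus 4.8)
The plan is to deduce this from the general super Morita theory together with Theorem \ref{thm:commutant}. Since $A/R$ is a super Azumaya algebra, by definition $A$ is a faithful, finitely generated projective $R$-module and $\phi: A^e \to \underline{End}_R(A)$ is an isomorphism of $R$-superalgebras. First I would observe that a faithful, finitely generated projective module over a supercommutative ring is automatically a progenerator: it is finitely generated projective by hypothesis, and for the generator property one checks that $tr(A) = R$, using that $A$ is finitely generated projective (so the dual basis elements exist) together with faithfulness over the supercommutative $R$ (which forces the trace ideal, being a homogeneous ideal whose annihilator kills $A$, to be all of $R$); then invoke Prop. \ref{prop:generator}. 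So $A$ is an $R$-progenerator.

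Next I would feed this into ``super Morita I" (Theorem \ref{thm:moritaI}). Taking $P = A \in {}_R \mathfrak{M}$ (working now with the left-module version of the Morita theorems, as sanctioned by the Remark following the proof of Prop. \ref{prop:moritacontext1}), the associated endomorphism super ring is $S = \underline{End}_R(A)$, which by the Azumaya hypothesis is isomorphic as a super ring to $A^e$ via $\phi$. Theorem \ref{thm:moritaI}(2) then gives that $A \otimes_R -: {}_R \mathfrak{M} \to {}_{A^e} \mathfrak{M}$ (after transporting along $\phi$) is a category equivalence, with quasi-inverse $Q \otimes_{A^e} -$, where $Q = A^* = \underline{Hom}_R(A, R)$ equipped with its natural bimodule structure. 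The one subtlety here is bookkeeping: one must check that the standard conventions relating left $A^e$-modules and $(A,A)$-bimodules (spelled out just before Lem. \ref{lem:homlemma}) are compatible with the identification $S \cong A^e$ coming from $\phi$, so that ``$S$-module" really does mean ``$(A,A)$-bimodule" in the intended sense. This is a routine but slightly delicate sign check.

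It remains to identify the quasi-inverse $Q \otimes_{A^e} -$ with the supercommutant functor $-^A$. Here I would use Lem. \ref{lem:rhomlemma} (the right-module version of the Hom-tensor lemma): since $A$ is finitely generated projective over $R$, for any left $A^e$-module $M$ we have a natural isomorphism $Q \otimes_{A^e} M = A^* \otimes_{A^e} M \cong \underline{Hom}_{A^e}(A, M)$ of left $R$-modules — one has to verify the left $A^e$-module structure used in Lem. \ref{lem:rhomlemma} matches the one in play, via the remark following Lem. \ref{lem:homlemma}. Then Theorem \ref{thm:commutant} supplies a natural isomorphism $\underline{Hom}_{A^e}(A, M) \cong M^A$ of left $R$-modules, functorial in $M$. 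Composing, $Q \otimes_{A^e} - \cong -^A$ as functors ${}_{A^e}\mathfrak{M} \to {}_R\mathfrak{M}$, and likewise I would check directly that $A \otimes_R - $ is (isomorphic to) the other half of the equivalence. Chaining these natural isomorphisms with Theorem \ref{thm:moritaI} yields that $A \otimes_R -$ and $-^A$ are mutually inverse equivalences.

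The main obstacle I anticipate is not any single hard idea but the accumulated bookkeeping: keeping the left/right and $A^e$ versus $(A,A)$-bimodule dictionaries consistent, and making sure the three natural isomorphisms being composed (super Morita I, Lem. \ref{lem:rhomlemma}, Theorem \ref{thm:commutant}) are all genuinely natural in $M$ and compatible with the relevant module structures, so that the composite really is an equivalence of categories and not merely a levelwise isomorphism. Establishing that faithful finitely generated projective over supercommutative $R$ implies progenerator is the only place where a genuinely new (though short) argument is needed.
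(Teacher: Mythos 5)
Your overall strategy matches the paper's: invoke ``super Morita I'' with $P = A$ and $S = \underline{End}_R(A) \cong A^e$ (via $\phi$), then identify the quasi-inverse $A^* \otimes_{A^e} -$ with $-^A$ by routing through $\underline{Hom}_{A^e}(A,M)$ and Theorem \ref{thm:commutant}. But there are two real gaps in the details.

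First, the hypothesis you cite for the Hom--tensor lemma is the wrong one. To get $A^* \otimes_{A^e} M \cong \underline{Hom}_{A^e}(A,M)$ from Lem.~\ref{lem:rhomlemma}/\ref{lem:homlemma}, you are applying the lemma over the base ring $A^e$, so what you need is (i) that the dual being tensored is the \emph{$A^e$-dual} $A^\vee := \underline{Hom}_{A^e}(A, A^e)$, and (ii) that $A$ (equivalently $A^\vee$) is finitely generated projective \emph{over $A^e$} --- neither of which is ``$A$ is finitely generated projective over $R$,'' which is the hypothesis feeding into super Morita~I, not into this lemma. Your proposal silently treats $A^*$ (the $R$-dual) as if it were $A^\vee$ (the $A^e$-dual) and invokes projectivity over the wrong ring. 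The paper supplies exactly these two missing links: $A^* \cong A^\vee$ as $(R,A^e)$-bimodules is Prop.~\ref{prop:moritacontext1}(2b) applied to the Morita context, and $A^\vee$ being a projective $A^e$-module is Prop.~\ref{prop:leftright} (the dual of a progenerator is a progenerator over $S$). Without these, the chain of isomorphisms does not close.

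Second, your sketch that a faithful finitely generated projective module over a supercommutative ring is automatically a generator does not work as written. You argue that $\mathrm{ann}(tr(A))$ annihilates $A$ (true, since $A = A\cdot tr(A)$ by the dual basis lemma), hence $\mathrm{ann}(tr(A)) = 0$ by faithfulness; but an ideal with zero annihilator need not be the unit ideal (consider $2\mathbb{Z} \subset \mathbb{Z}$). The correct argument uses that $tr(A)$ is an idempotent ideal generated by an idempotent $e$ when $A$ is finitely generated projective, so that $(1-e)A = 0$, and then faithfulness forces $e=1$; alternatively one localizes. Note the paper itself merely asserts ``by definition a super Azumaya algebra is a progenerator'' without supplying an argument, so you are going beyond the paper here --- you just need the argument to actually be valid.
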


\begin{proof}
By ``super Morita I" (Thm. \ref{thm:moritaI}), we have that $A \otimes_R -:
$ and $A^* \otimes_{A^e} -$ are mutually inverse category
equivalences (after composing with the obvious category equivalence
${}_S\mathfrak{M} \to {}_{A^e} \mathfrak{M}$ induced by the
isomorphism $\phi: A^e \to \underline{End}_R(A) = S$). Let $A^\vee$ denote
$\underline{Hom}_{A^e}(A, A^e)$. We claim there is a sequence of
natural isomorphisms:

\begin{align*}
A^* \otimes_{A^e} M &\cong A^\vee \otimes_{A^e} M\\
&\cong \underline{Hom}_{A^e}(A, M)\\
&\cong M^A\\
\end{align*}

By part 2b) of Lem. \ref{prop:moritacontext2} $A^* \cong
A^\vee$ as an $(R,A^e)$-bimodule; hence the first isomorphism exists and is obviously functorial in $M$. Note that the $R$-action on $A^\vee$ is given by $x(rf) := (-1)^{|r||f(x)|} (xr)f$.
By Lemma \ref{prop:leftright}, $A^\vee$ is a projective
$A^e$-module. Hence, by Lemma \ref{lem:homlemma} and the following Remark, the second
isomorphism (of left $R$-modules) exists and is functorial in $M$. By Thm.
\ref{thm:commutant}, the third isomorphism (of left $R$-modules) exists and is also
functorial in $M$. We conclude that the identification of $A^*
\otimes_{A^e} M$ with $M^A$ is functorial in $M$, hence we have
shown that the functor $-^A$ is naturally equivalent to $A^*
\otimes_{A^e} -$, so is also a functor inverse to $A \otimes_R -$,
as desired.

\end{proof}

\section{Acknowledgments} Thanks go to V.S. Varadarajan, who
generously provided much helpful advice and support. I am also
indebted to Ben Antieau, who read a draft of the paper and gave many suggestions for improvement. Finally, the germ of this work was suggested in a note of P.
Deligne \cite{Del}, who pointed out the relationship of the super
skew field $\mathbb{D}$ to $\Pi$-invertible sheaves.

\bibliographystyle{amsplain}
\bibliography{moritabib}

\end{document}